\newcommand{\bi}{\begin{itemize}}
\newcommand{\ei}{\end{itemize}}
\newcommand{\ben}{\begin{enumerate}}
\newcommand{\een}{\end{enumerate}}
\newcommand{\be}{\begin{equation}}
\newcommand{\ee}{\end{equation}}
\newcommand{\bea}{\begin{eqnarray}} 
\newcommand{\eea}{\end{eqnarray}}
\newcommand{\ba}{\begin{align}} 
\newcommand{\ea}{\end{align}}
\newcommand{\bse}{\begin{subequations}} 
\newcommand{\ese}{\end{subequations}}
\newcommand{\bc}{\begin{center}}
\newcommand{\ec}{\end{center}}
\newcommand{\bfi}{\begin{figure}}
\newcommand{\efi}{\end{figure}}
\newcommand{\ca}[2]{\caption{#1 \label{#2}}}
\newcommand{\ig}[2]{\includegraphics[#1]{#2}}
\newcommand{\bmp}[1]{\begin{minipage}{#1}}
\newcommand{\emp}{\end{minipage}}
\newcommand{\pig}[2]{\bmp{#1}\includegraphics[width=#1]{#2}\emp}
\newcommand{\bigO}{{\mathcal O}}
\newcommand{\tbox}[1]{{\mbox{\tiny #1}}}
\newcommand{\mbf}[1]{{\mathbf #1}}
\newtheorem{thm}{Theorem}
\newtheorem{dfn}[thm]{Definition}
\newtheorem{rmk}{Remark}
\newcommand{\fref}[1]{Fig.~\ref{#1}}          
\newcommand{\sref}[1]{Sec.~\ref{#1}}          
\newcommand{\tref}[1]{Table~\ref{#1}}
\DeclareMathOperator{\im}{Im}
\DeclareMathOperator{\erf}{erf}
\newcommand{\vt}[2]{\left[\begin{array}{r}#1\\#2\end{array}\right]} 
\newcommand{\mt}[4]{\left[\begin{array}{rr}#1&#2\\#3&#4\end{array}\right]} 
\newcommand{\xx}{\mbf{x}}
\newcommand{\yy}{\mbf{y}}
\newcommand{\kk}{\mbf{k}}
\newcommand{\NN}{{\cal N}}       
\newcommand{\km}{{k_-}}      
\newcommand{\pO}{{\partial\Omega}}
\newcommand{\RR}{\mathbb{R}}
\newcommand{\ZZ}{\mathbb{Z}}
\newcommand{\Gm}{{\Gamma_-}} 
\newcommand{\eo}{\mbf{e}_x}       
\newcommand{\et}{\mbf{e}_y}       
\newcommand{\al}{\alpha}
\newcommand{\bt}{\beta}
\newcommand{\ui}{u^\tbox{i}}       
\newcommand{\ut}{u^\tbox{t}}          
\newcommand{\kx}{{\kappa^m_x}}
\newcommand{\ky}{{\kappa^n_y}}
\newcommand{\kz}{{\kappa^{(m,n)}_z}}
\newcommand{\snp}{\sum_{n=-P/2+1}^{P/2}} 
\newcommand{\pdeg}{p}    
\newcommand{\intc}{\int_0^{2\pi}}        
\newcommand{\mintc}{\frac{1}{2\pi}\int_0^{2\pi}}        
\newcommand{\ebc}{{\epsilon_\tbox{bc}}}           
\newcommand{\eper}{{\epsilon_\tbox{per}}}
\newcommand{\eflux}{{\epsilon_\tbox{flux}}}
\newcommand{\UU}{\Omega_\tbox{box}}            
\newcommand{\Aelse}{A_\tbox{else}}
\journal{Journal of Computational Physics}
\begin{document}  
\begin{frontmatter}
\title{Efficient numerical solution of acoustic scattering from doubly-periodic arrays of axisymmetric objects}
\author[p]{Yuxiang Liu\corref{YL}}
\ead{Yuxiang.Liu.GR@dartmouth.edu}
\cortext[YL]{Corresponding author. Tel.: +1 603 277 0791.}
\author[m,p]{Alex H. Barnett}
\address[m]{Department of Mathematics, Dartmouth College, Hanover, NH 03755, USA}
\address[p]{Department of Physics \& Astronomy, Dartmouth College, Hanover, NH 03755, USA}

\begin{abstract}
We present a high-order accurate boundary-based solver for
three-dimensional (3D)
frequency-domain scattering from a doubly-periodic grating of
smooth axisymmetric sound-hard or transmission obstacles.
We build the one-obstacle solution operator
using separation into $P$ azimuthal modes via the FFT,
the method of fundamental solutions (with $N$ proxy points
lying on a curve), and dense direct least-squares solves;
the effort is $\bigO(N^3P)$ with a small constant.
Periodizing 
then combines fast multipole summation of nearest neighbors
with an auxiliary global Helmholtz basis expansion
to represent the distant contributions,
and enforcing quasiperiodicity
and radiation conditions on the unit cell walls.
%
Eliminating the auxiliary coefficients,
and preconditioning with the one-obstacle solution operator,
leaves a well-conditioned square linear system that is solved iteratively.
The solution time per incident wave is
then $\bigO(NP)$ at fixed frequency.
%
Our scheme avoids singular quadratures,
periodic Green's functions, and lattice sums,
and its convergence rate is unaffected by resonances within obstacles.
We include numerical examples such as
scattering from a
grating of period $13\lambda \,\times\, 13\lambda$ comprising highly-resonant
sound-hard ``cups'' each needing $NP=64800$ surface unknowns,
to 10-digit accuracy, in half an hour on a desktop.
%
\end{abstract}

\begin{keyword}
scattering \sep Helmholtz \sep
acoustic \sep diffraction \sep grating \sep meta-materials \sep periodic
\sep fundamental solutions

\MSC[2010] 65N38  
\sep 65N80   
\end{keyword}

\end{frontmatter}

\section{Introduction}
\label{sec:intro}

The control of waves using periodic structures
is crucial for modern optical, electromagnetic and
acoustic devices such as diffraction gratings, filters,
photonic crystals and meta-materials \cite{jobook},
solar cells \cite{atwater}, and absorbers \cite{noisecontrol,DCD1}.
Periodic scattering problems also arise in monitoring \cite{scatterometry}
or imaging \cite{Malcolm14} a patterned structure.
Outside of asymptotic regimes where analytic models are useful,
efficient and accurate numerical simulation is key
to assess sensitivity to changes in parameters,
and to optimize those parameters to improve device performance.

Here we present a solver for 3D acoustic scattering from a doubly-infinite
array of isolated axisymmetric objects, as shown in Fig.~\ref{f:geom}(a).
With acoustic applications in mind, we focus on the Neumann (sound-hard)
boundary condition (including resonators) and transmission problems,
and on highly accurate solutions.
Sound absorbing surfaces often involve periodic structures
such as perforated slabs \cite[Sec.~6.5.4]{noisecontrol}, resonators
\cite[Sec.~9.2.3]{noisecontrol} \cite[Ch.~12]{handbkac},
or wedges (as in anechoic chamber walls) \cite[Fig.~12-13]{handbkac}.
Recently, there has also been interest in
acoustic meta-materials \cite{acousticmeta} or ``phononics'',
including new phenomena such as anomalous transmission
\cite[Ch.~4]{acousticmeta} and acoustic cloaking
\cite{acousticcloak}. 
The new techniques we present
are relatively simple to extend to
multilayer geometries (such as perforated slabs)
and poroelastic media (such as foams) common in noise control
\cite{poroelas}.
We view this work---in particular the periodizing scheme,
which is very general---as a step toward 3D multilayer periodic
boundary-based solvers (generalizing recent work in 2D \cite{mlqp})
for acoustics, coupled acoustics-elastodynamics,
the Maxwell equations, and Stokes flow.

Let us explain where our contribution fits into the bigger picture.
Direct volume discretization methods including the finite element
\cite{bao95,scatterometry} and
finite difference time-domain \cite{taflove,FDTD3DBC} are
common for acoustic scattering problems,
and can be successful for low-to-medium frequencies and accuracies.
However, low-order finite elements suffer from
the accumulation of phase errors across the domain (known as ``pollution''
\cite{pollution}),
meaning that an increasing number of unknowns per wavelength
are needed as the wavenumber $k$ grows, 
so that greater than $\bigO(k^3)$ unknowns are needed to maintain accuracy.
High-order finite elements,
while showing recent promise for medium-frequency acoustics
at accuracies of a couple of digits \cite{beriot16},
are tricky to generate in complex geometries, and have not been used for the
periodic problem as far as we are aware.
Time-domain methods (e.g.\ FDTD) also suffer from a low convergence order
(usually at most 1st-order in the presence of surfaces;
recent work has recovered 2nd-order \cite{FDTD3DBC}),
difficulty in modeling impedance boundary conditions \cite{FDTDimp},
and very long settling times if a structure is resonant.
It should be noted that via the Fourier transform, FDTD can solve
many frequencies in a single shot.
However, accuracies from such methods are commonly 1-2 digits,
even with dozens of grid points per wavelength \cite{FDTD3DBC}.

For piecewise-uniform media,
boundary-based methods become more efficient than direct discretization
once the geometry is more than a couple of wavelengths across,
and/or if an accuracy beyond a couple of digits is needed.
Only $\NN = \bigO(k^2)$
unknowns are needed for a smooth obstacle
if a high-order surface quadrature is used.
The most common approach is the boundary integral method (BIE),
in which the scattered wave is represented via
potential theory using the Helmholtz Green's function
\be
G_k(\xx,\yy) := \frac{e^{ik|\xx-\yy|}}{4\pi |\xx-\yy|}, 
\qquad \xx,\yy \in \RR^3 
~.
\label{G}
\ee
For the mathematical foundation of this method in the non-periodic
setting see \cite{CK83,coltonkress}, and in our periodic setting \cite{arenshabil}.
Formulation as a 2nd kind Fredholm integral equation on $\pO$,
the boundary of an obstacle $\Omega\subset\RR^3$, has the advantage that
the condition number of the discretized linear system remains small
independent of $\NN$ as $k$ is held fixed.
Using the fast multipole method (FMM) \cite{fmm_Greengard,fmm1}
to apply the dense matrix
discretization of the operator inside an iterative
Krylov method solver such as GMRES \cite{gmres}
can create an $\bigO(\NN)$ solver (at moderate frequencies).
However, in practice two problems plague this otherwise attractive scheme:
1) for resonant or geometrically complex
objects the large number of eigenvalues close to the
origin causes the number of iterations to be unreasonably large,
and direct solvers can be orders of magnitude faster
\cite{qpfds}.
2) high-order surface quadratures in 3D are quite challenging
and are still an area of active research
\cite{bruno01,ying06,nicholas,bremer3d,qp3dqbx}.
We note that low-order Galerkin methods are the most commonly used
in Helmholtz problems \cite{otani08}. 
The method we propose fixes both of these problems in the axisymmetric setting.

We use a close relative of the BIE, the {\em method of fundamental solutions} (MFS,
also known as auxiliary sources \cite{Fridon_MAS} or the
charge simulation method \cite{Katsurada1}),
in which (for sound-hard scattering)
the scattered wave has a representation
\be
u(\xx) \approx \sum_{j=1}^\NN c_j G_k(\xx,\yy_j)
\label{mfs}
\ee
where $\yy_j \in \RR^3$, $j=1,\ldots,\NN$,
are $\NN$ {\em source points} covering a source surface $\Gm$
lying inside the obstacle $\Omega$ but close to its boundary $\pO$.
The coefficients $\{c_j\}$ are the solution to a linear system
set up by matching \eqref{mfs} to the boundary data
at collocation points on $\pO$.
The MFS idea (see the review \cite{mfs_review}) is old, being
first proposed by Kupradze--Aleksidze \cite{mfs_Kupradze},
and is common in the engineering community \cite{doicu}.
The MFS may be viewed as an exponentially ill-conditioned first-kind
BIE. Yet, when combined with a backward stable linear solver
it can achieve close to machine precision
when the source points are chosen correctly \cite{mfs}.
An optimal choice of source points remains one of the more ad-hoc
aspects of the MFS, although we demonstrate in \sref{s:loc} an excellent scheme
for analytic boundaries that requires only a single adjustable parameter.
The MFS has a significant advantage over BIE:
because $\Gm$ is separated from $\pO$,
{\em no singular surface quadratures} are needed
(either for the self-interaction of the object or for evaluation of $u$
close to $\pO$).
This will enable us to handle Neumann and transmission
conditions simply 
(in contrast, the robust BIE formulation in the Neumann case requires
handling the derivative of the double-layer operator
either as a hypersingular operator \cite{kress95}
or using Calder\'on regularization
\cite[Sec.~3.6]{CK83} \cite{Anand_Corner},
and in the transmission case the compact difference of such operators
\cite[Sec.~3.8]{CK83}).
The MFS has proven useful in 2D acoustic settings \cite{poroelas}.

In the axisymmetric case the MFS becomes more efficient
\cite{karagaxi,Fridon_MAS,Fridon_Optical,Chen13},
because the problem
separates into $P$ angular Fourier modes that may be solved
independently, each with a small number of unknowns $N$,
the total number of unknowns being $\NN = NP$.
As we will show, many smooth obstacles up to $10\lambda$ in diameter need
only $N<10^3$ for 10-digit accuracy, with $P$ of order one hundred.
Since dense least-squares solves of such a  size $N$ are cheap, the
good conditioning of a BIE approach confers little advantage over
using MFS, at least for smooth domains.
(Recent work also shows that several digits of accuracy is possible with the MFS
in corner domains \cite{Hcorner,larrythesis}.)
We note that recently some technical challenges of high-order BIE
on axisymmetric surfaces have been solved
\cite{Young,hao3daxi,helsing_axi,helsingIEEE15},
but not in the case of transmission
boundary conditions that we address.

We now outline how we turn a scheme for
solving the scattering from one obstacle into a scheme for a bi-infinite array of obstacles---we refer to this as ``periodizing'';
it is one of the main contributions of this paper.
The standard way to periodize in 2D
\cite{CWnystrom,brunohaslam09}
or 3D \cite{nicholas,arenshabil,brunoqp3d} 
is to replace the free-space Green's function \eqref{G}
by its {\em quasiperiodic} version which sums over all source points,
\be
G_k^\tbox{QP}(\xx,\yy) := \sum_{n,m\in \ZZ}\al^m \bt^n G_k(\xx,\yy + m\eo + n\et)
~,
\label{GQP}
\ee
where the {\em Bloch phases} are $\al$ and $\bt$ (defined below in \eqref{bloch}),
and the array lattice vectors are $\eo$ and $\et$ as in Fig.~\ref{f:geom}(a).
The above sum is notoriously slowly convergent,
hence a host of schemes such as Ewald's method
\cite{ewald,jordan86,arenshabil,arens10},
other spatial-spectral splittings \cite{jorgenson},
or lattice sums \cite{otani08,lintonrev}
have been developed for numerical evaluation.
These schemes are generally
quite complicated, both analytically and in terms of implementation,
and raise two major problems:
\ben
\item
While they are able to fill the $\NN^2$ elements of a dense matrix,
most such schemes are incompatible with the FMM or other fast algorithms.
Exceptions are the lattice-sum based correction to the FMM of
Otani et al \cite{otani08} and the rolled-off spatial sum
of Bruno et al \cite{brunoqp3d}.
\item At certain sets of parameters $(\al,\bt)$ and $k$
called {\em Wood anomalies}, the quasiperiodic
Green's function does not exist, i.e.\ the sum \eqref{GQP} diverges,
even though the solution to the diffraction problem remains well-posed and
finite.
\een
We propose a simple new approach, following
\cite{BG_BIT,Gillman_Barnett_jcp,mlqp,qp3dqbx},
which cures the first problem.
In the setting of continuous interfaces such as \cite{mlqp},
our approach would also cure the second;
however, since we focus on isolated obstacles, then
(as with the work in \cite{otani08,brunoqp3d})
we do not attempt to address Wood anomalies here.
Our representation restricts the sum \eqref{GQP} to only the $3\times 3$ block of nearest
neighbors, adding an auxiliary spherical harmonic basis
(of maximum degree $\pdeg$) for smooth
Helmholtz solutions in the neighborhood of the object,
\be
u(\xx) \approx \sum_{j=1}^\NN c_j G^\tbox{near}_k(\xx,\yy_j)
+ \sum_{l=0}^\pdeg\sum_{m=-l}^l d_{lm} j_l(kr) Y_{lm}(\theta,\phi)
~,\quad\mbox{ where}
\quad 
G_k^\tbox{near}(\xx,\yy) := \sum_{|m|,|n| \le 1}\al^m \bt^n G_k(\xx,\yy + m\eo + n\et)
~.
\label{rep}
\ee
Here $(r,\theta,\phi)$ are spherical coordinates.
The auxiliary basis represents the contribution from the remainder of the
lattice of images; since they are far from $\Omega$, this has
rapid exponential convergence with $\pdeg$.
An expanded linear system is used to solve for the coefficients.
Being rectangular and ill-conditioned (as with the plain MFS method),
this cannot be solved  iteratively in the case of large $\NN$.
In \sref{s:iter} we show that, because of axial symmetry,
preconditioning is possible using a direct
factorization of the MFS matrix pseudoinverse
for the single obstacle.
In the iterative scheme, the contributions from the near images are applied
using the FMM, to give a scheme that solves each new incident wave in
$\bigO(\NN)$.
Our work can thus be seen as a periodic generalization of the fast multibody
scattering work of Gumerov--Duraiswami for spheres \cite{multisphere},
of Gimbutas--Greengard for smooth scatterers \cite{fmm_maxwell},
and of Hao--Martinsson--Young \cite{hao3daxi} for axisymmetric scatterers.

One novel aspect is the high accuracy we achieve (around 10 digits)
compared to most other periodic integral equation work \cite{otani08,brunoqp3d}.
This is true even for resonant obstacles,
thanks to the {\em direct} solve used for the isolated obstacle.
Our scheme is very practical for periods up to a dozen wavelengths
in each direction, but cannot go much higher than this in reasonable CPU time
due to the $O(p^6)$ scaling of the dense matrix operations.
However, this covers the vast majority of diffraction
applications, where typically
the period is of order the wavelength, and also allows ``super-cell''
simulations, for instance for random media.



A similar scheme has recently been proposed
by Gumerov--Duraiswami \cite{gumerov}
for periodizing the 3D Laplace equation,
using proxy sources
instead of the auxiliary basis in \eqref{rep}; however, in Appendix~A we show that
for our application to the 3D Helmholtz equation
spherical harmonics are much more efficient.


Our paper is organized as follows.
In \sref{s:bvp} we state the two periodic scattering boundary value problems
under study, namely Neumann and transmission conditions.
In \sref{s:mfs} we explain the MFS in the axisymmetric one-obstacle setting,
including the choice of source points,
and study its convergence 
when using a dense direct solve
for each Fourier mode.
In \sref{s:per} we present the periodizing scheme and the resulting
full linear system, and then show how Schur complements can turn this into
a 
well-conditioned square system that can be solved iteratively.
Numerical results are presented in \sref{s:res}, then
we conclude in \sref{s:con}.
The appendix compares the efficiency of
our periodizing scheme with a variant using proxy points.

\begin{figure} 
  \centering
\raisebox{-1.3in}{\ig{width=2in}{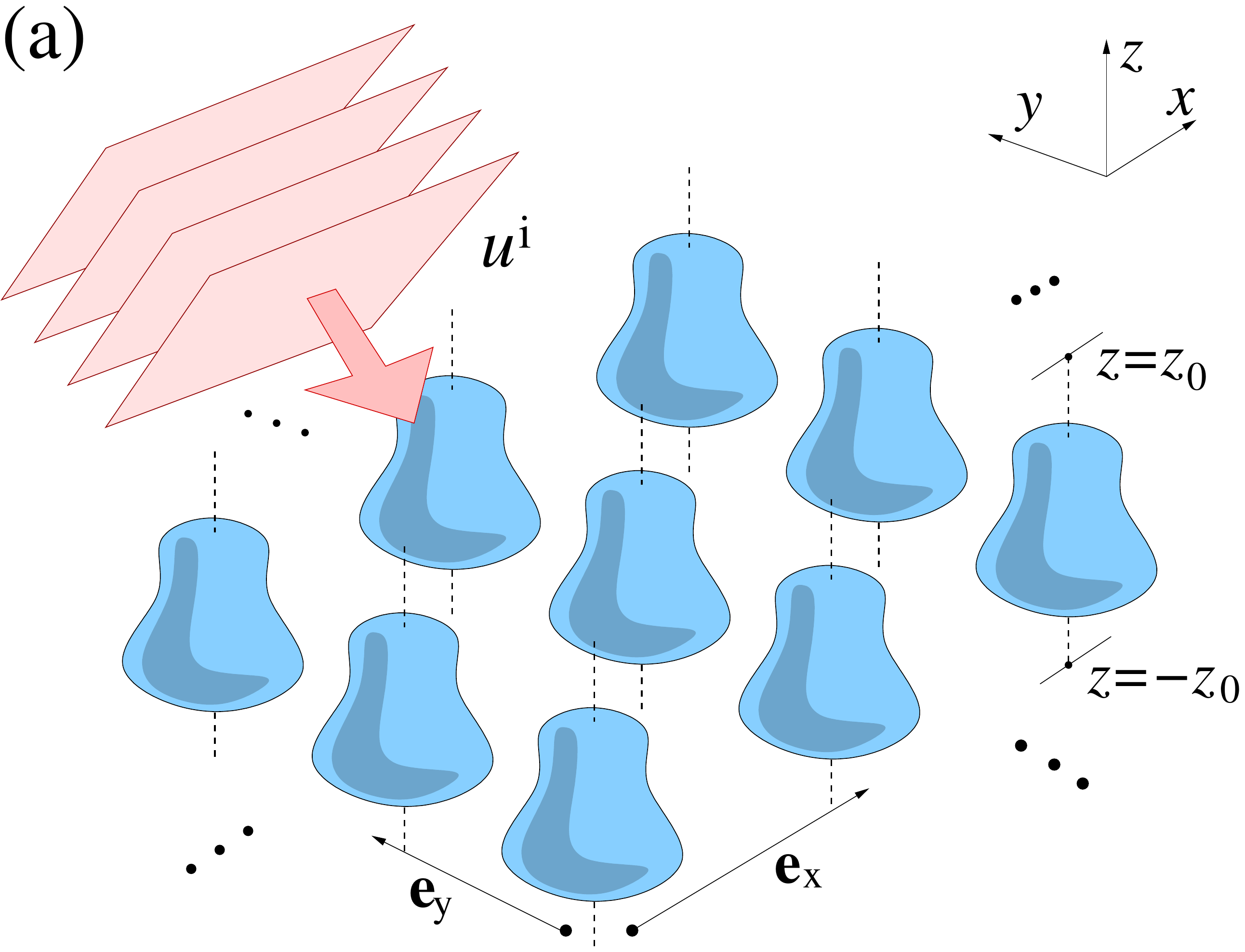}}
  (b)\raisebox{-1.3in}{\ig{width=2in}{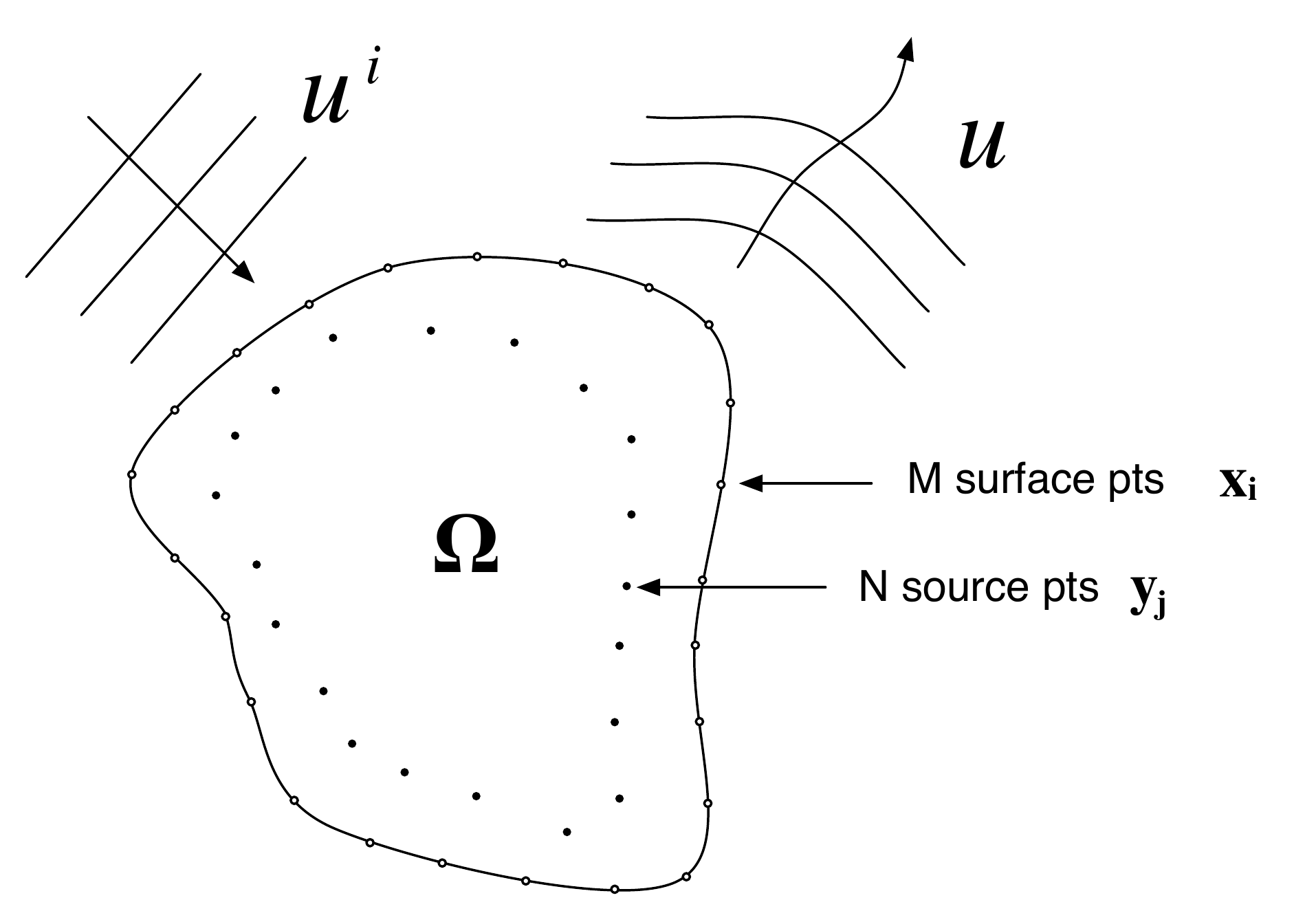}}
  (c)\raisebox{-1.3in}{\ig{width=2in}{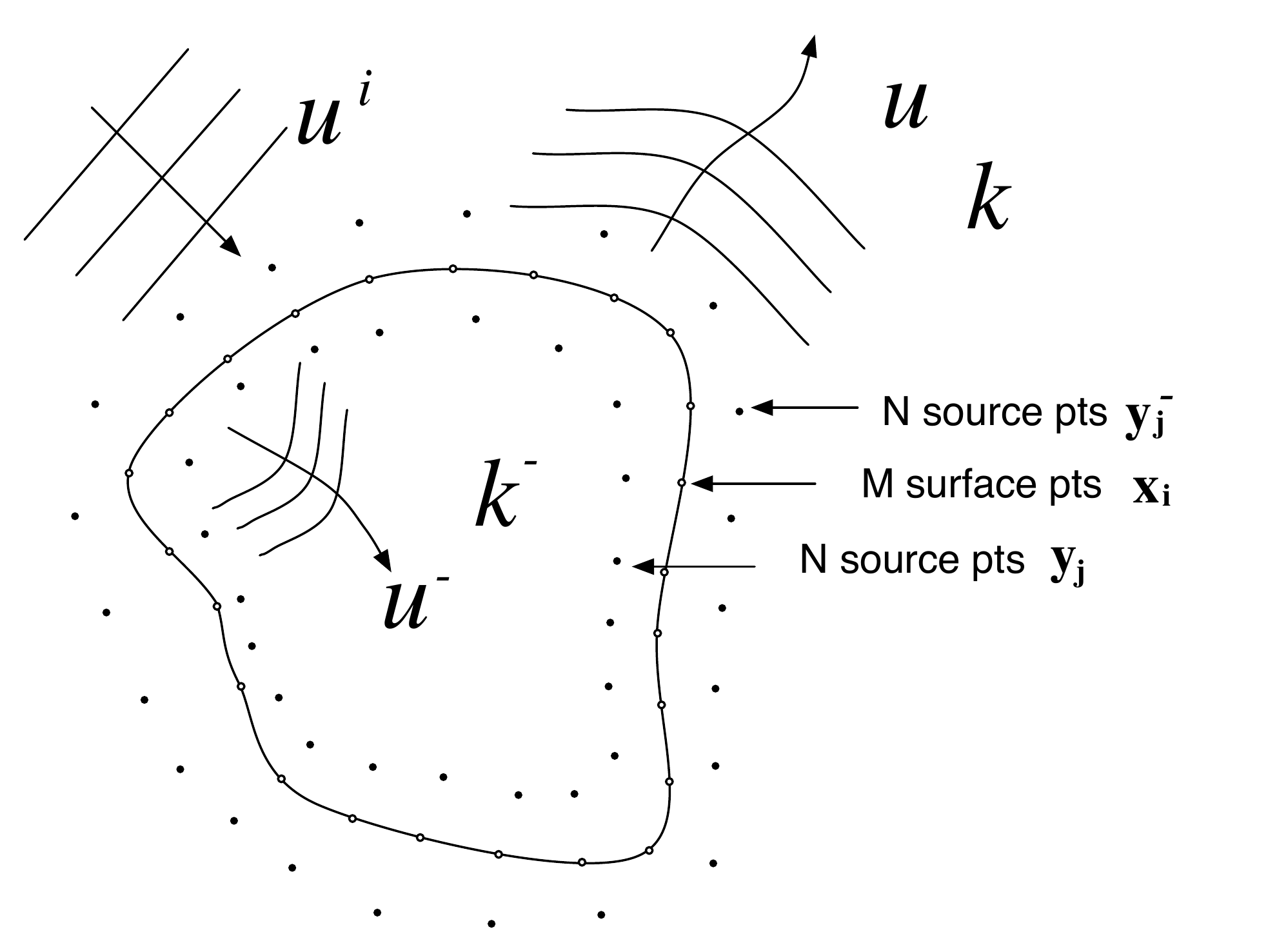}}
\caption{(a) Periodic scattering geometry in 3D.
(b) Sketch of the method of fundamental solutions (MFS) for solution 
of an exterior BVP. (For simplicity, the 2D case is shown.)
(c) MFS for a transmission problem (again, the 2D case is shown).
}
  \label{f:geom}
\end{figure}

\section{Formulation of the boundary value problems}
\label{s:bvp}

Let $\Omega\subset\RR^3$ be an axisymmetric obstacle with
boundary $\pO$ defined by the rotation about the $z$-axis of
a smooth curve $\gamma$ lying in the $\rho$-$z$ plane,
where $(\rho,\theta,z)$ define a cylindrical coordinate system.
For simplicity we consider a rectangular lattice
defined by vectors $\eo=(e_x,0,0)$ and $\et=(0,e_y,0)$,
so that the grating of objects (see \fref{f:geom}(a)) is defined by
$\Omega_\Lambda := \{(x,y,z)\in\RR^3: (x+me_x,y+ne_y,z) \in \Omega,
\; m,n\in \ZZ\}$,
and is assumed not to self-intersect.
Note that our scheme will also apply to general (possibly skew) lattices
and a general (fixed) axis of symmetry of the objects with minor changes in
bookkeepping.
An incident plane wave (representing pressure variation of
a time-harmonic acoustic wave),
\be
\ui(\xx) = e^{i\kk \cdot \xx}, \qquad \xx:=(x,y,z) \in \RR^3
\label{ui}
\ee
impinges on
this lattice, with given wavevector $\kk=(k_x,k_y,k_z)$
whose free-space wavenumber is $k:= |\kk|$.
This incident wave is quasiperiodic in the following sense.
\begin{dfn}
A function $u: \RR^3 \to \mathbb C$ is said to be quasiperiodic with
Bloch phases $\al$ and $\bt$, if 
\be
\al^{-1} u(x+e_x, y, z) = \bt^{-1} u(x,y+e_y,z)= u(x,y,z)
\qquad \forall (x,y,z) \in \RR^3~.
\label{per}
\ee
\end{dfn}
The incident wave parameters fix the Bloch phases
\be
\al=e^{i\kk\cdot \eo}~, \qquad \bt=e^{i\kk\cdot \eo}
\label{bloch}
\ee
with $|\al|=|\bt|=1$, and $\ui$ is quasiperiodic with these phases.
The resulting scattered wave $u$ will share this quasiperiodic symmetry.
As usual in scattering theory \cite{coltonkress},
the physical wave outside the lattice of objects is the total
$\ut = \ui + u$.
The scattered wave obeys the exterior Helmholtz equation
\be
\Delta u + k^2 u=0,  \quad \text{in  } \RR^3 \backslash \overline{\Omega_\Lambda}
\label{helm}
\ee
and the upwards and downwards Rayleigh--Bloch radiation conditions
\cite{bonnetBDS,shipmanreview}
\begin{subequations}
\begin{align}
u(x,y,z) &= \sum _{m,n\in \mathbf Z} a_{mn} \exp{i[\kx x + \ky y + \kz (z - z_0)]}~,\qquad z \ge z_0, \; (x,y) \in \RR^2  \label{rb1} \\
u(x,y,z) &= \sum _{m,n\in \mathbf Z} b_{mn} \exp{i[\kx x + \ky y + \kz ( - z - z_0)]}~, \qquad z \le -z_0, \; (x,y) \in \RR^2 \label{rb2}
\end{align}
\end{subequations}
where $z_0$ is such that $\overline\Omega$ lies between the planes $z=\pm z_0$,
and 
where $\kx:= k_x + 2\pi m/e_x, \ky:= k_y + 2\pi n/e_y$ and
$\kz:= + \sqrt {k^2 - (\kx)^2 - (\ky)^2}$.
define the plane wave wavevectors $(\kx,\ky,\pm\kz)$.
Note that the sign of the square-root is taken as positive real or
positive imaginary.
These conditions state that $u$ can be written as a
uniformly convergent expansion of 
quasiperiodic plane waves of {\em outgoing or decaying} type away from
the lattice.
In applications the far field amplitudes $a_{mn}$ and $b_{mn}$ are
the desired quantities, giving the radiated
strengths in the various Bragg orders.

We will solve two cases of obstacle scattering:
1) Neumann (sound hard) boundary conditions scattering from an
impenetrable obstacle,
\be
\frac {\partial u} {\partial n} = -\frac {\partial \ui} {\partial n}, \quad  \text{on $\pO$ }
\label{neu}
\ee
and 2) Transmission conditions, with a new wavenumber $\km$
and quasiperiodic scattered wave $u^-$ inside the object (the incident
wave $\ui$ being defined as zero inside $\Omega$), i.e.,
\begin{subequations}
  \begin{align}
\Delta u^- + \km^2 u^- &=0,  \quad \text{in  } \Omega \label{helmi}\\
u - u^- &= -\ui  \quad  \text{on $\partial \Omega $ } \label{eq:neup1.3}\\  
\frac {\partial u} {\partial n} - \frac {\partial u^-} {\partial n} &= -\frac {\partial \ui} {\partial n} \quad  \text{ on } \partial \Omega~.
\label{eq:neup1.4}
  \end{align}
\end{subequations}
For simplicity, \eqref{eq:neup1.4} models the case
where the interior and exterior densities are equal
(a density difference would result in prefactors here
\cite[Sec.~2.1]{coltonkress}).
Note that, due to quasiperiodicity, the 
above PDE and boundary conditions need only be
defined on a single copy of the object in the lattice.

Given an incident wave $\ui$ with wavevector $\kk$,
the full boundary value problem (BVP) is defined by
\eqref{per}--\eqref{helm}, radiation conditions \eqref{rb1}--\eqref{rb2},
and either Neumann condition \eqref{neu} or transmission conditions
\eqref{helmi}--\eqref{eq:neup1.4}.
The following rigorous results are known about the BVPs.
With a general smooth obstacle shape $\Omega$ and fixed wavespeed ratio
(material property) $\km/k$, the transmission BVP
has a solution for all incident wave directions and
frequencies $k>0$ \cite[Thm.~9]{shipmanreview}.
The solution is unique for all but possibly a discrete
set of frequencies, for each incident wave direction
\cite[Thm.~8]{shipmanreview}.
Such frequencies are referred to as {\em bound states}
of the grating \cite{shipmanreview}, and correspond to physical
resonances of the BVP.
Similar statements hold in the Neumann case.
In the transmission case, if $\km<k$ and
all lines parallel to the $z$-axis intersect $\pO$ at only two points,
then no such bound states can exist \cite[Thm.~13]{shipmanreview}.
A similar analysis for a connected interface (or multiple such interfaces)
is carried out by Arens \cite{arenshabil}.

If $\kz=0$ for any pair of integers $(m,n)$, this defines a
{\em Wood anomaly} \cite{linton07,shipmanreview},
where one (or more) of the Rayleigh--Bloch waves is constant (non-decaying)
in the $z$ direction. Although \eqref{GQP} does not exist at Wood anomalies
(it diverges like an inverse square-root with respect to the incident
wave parameters), 
the BVP remains well-behaved, i.e.\ well conditioned with respect to varying the
amplitude of $\ui$.

\begin{rmk}  
Because a Bragg scattering mode is ``on the cusp of existence'' at a Wood
anomaly,
the rate of change of scattering coefficients $a_{mn}$ and $b_{mn}$
with respect to incident angle or frequency $k$ diverges there
(as an inverse square root singularity).
Thus, for example, at a parameter distance $10^{-10}$ from a Wood anomaly,
we expect to lose around 5 digits of accuracy purely due to
round-off in the representation of the input parameters.
\end{rmk}

In this work, we assume that we are not at a Wood anomaly.
However, 
we will find that the method we present in \sref{s:per} loses
of order the same number of digits as discussed in the above remark.

\bfi 
\pig{1.75in}{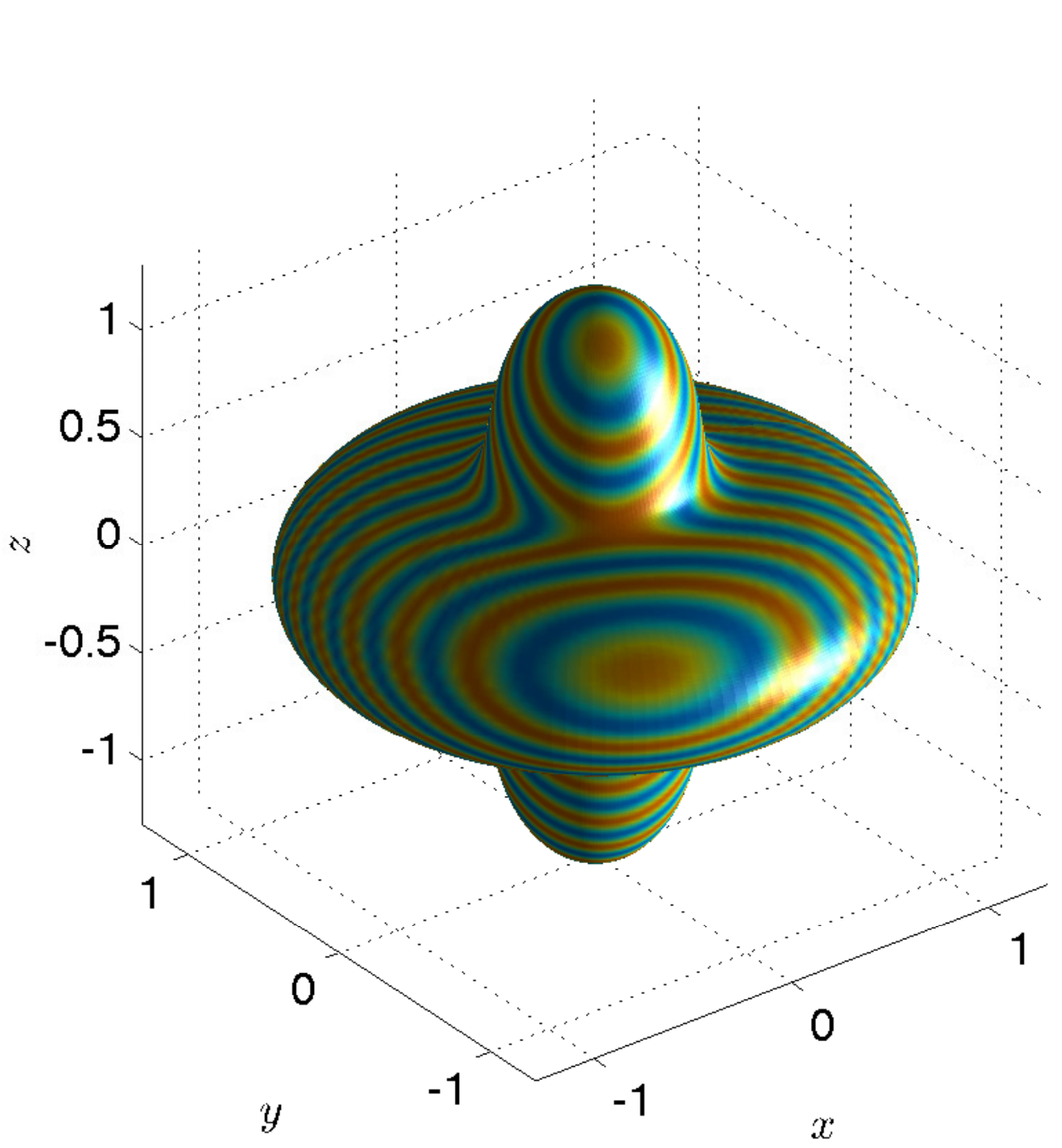} \qquad
\pig{1.75in}{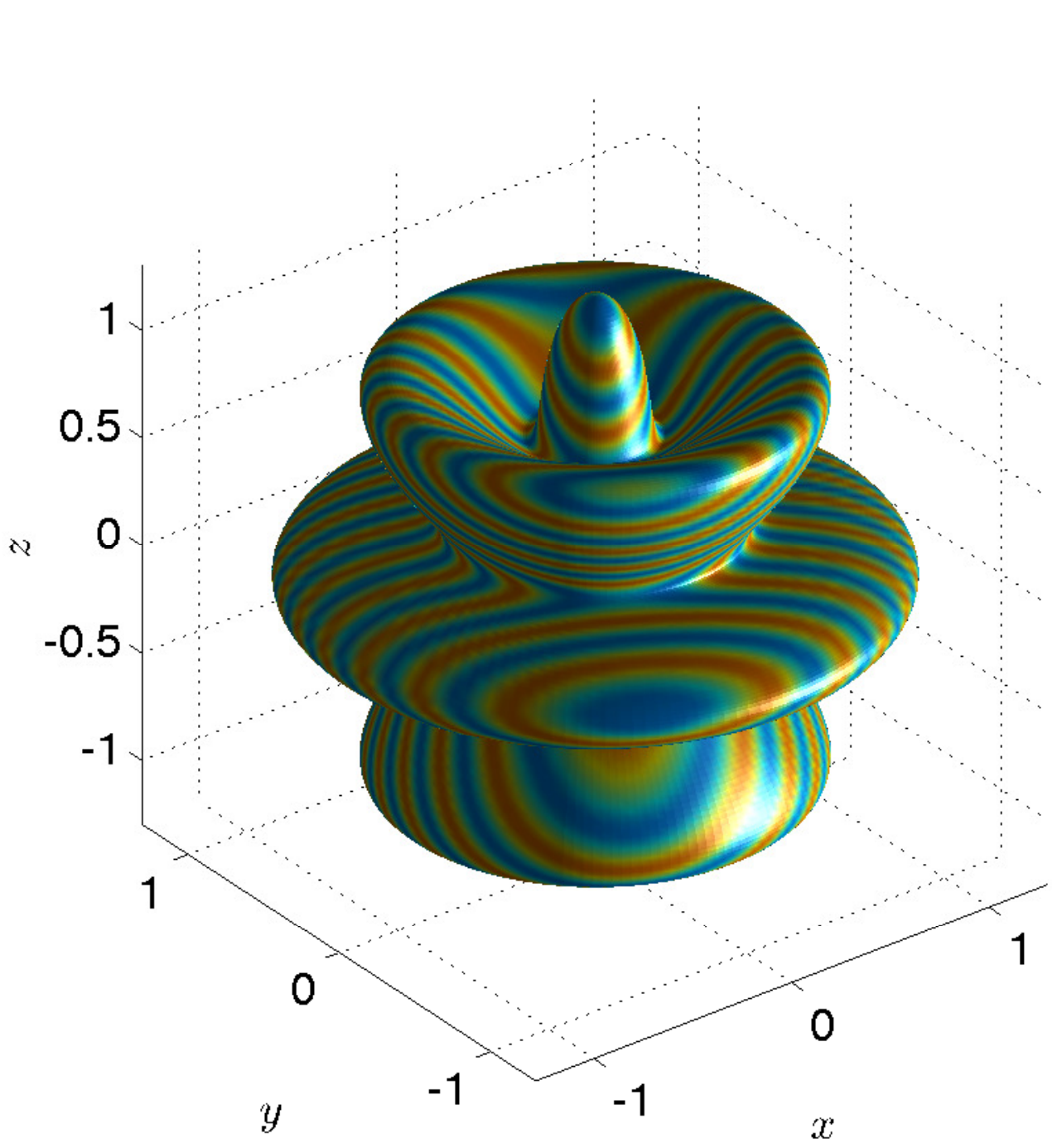} \qquad
\pig{1.75in}{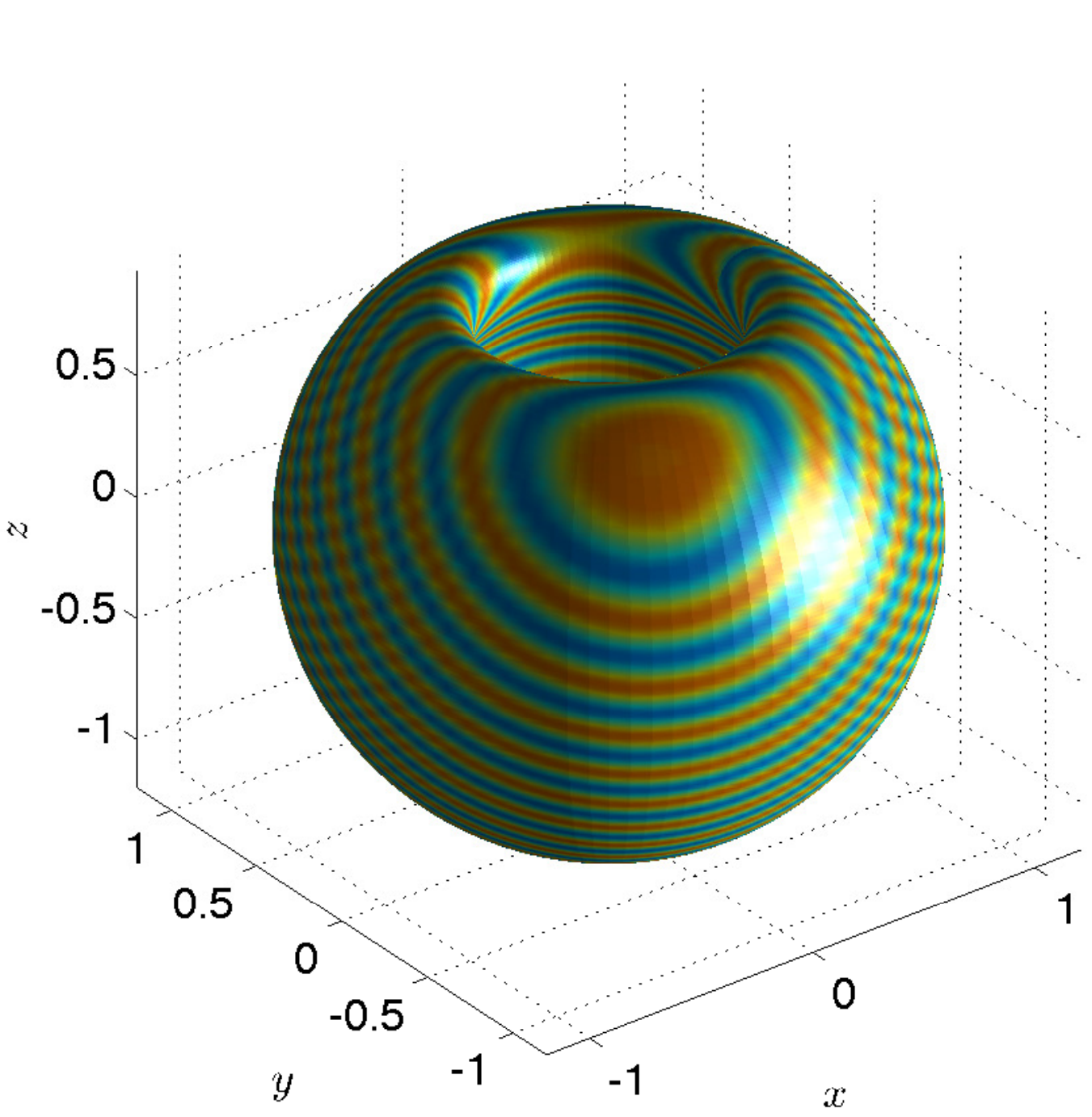}
\\
\pig{1.5in}{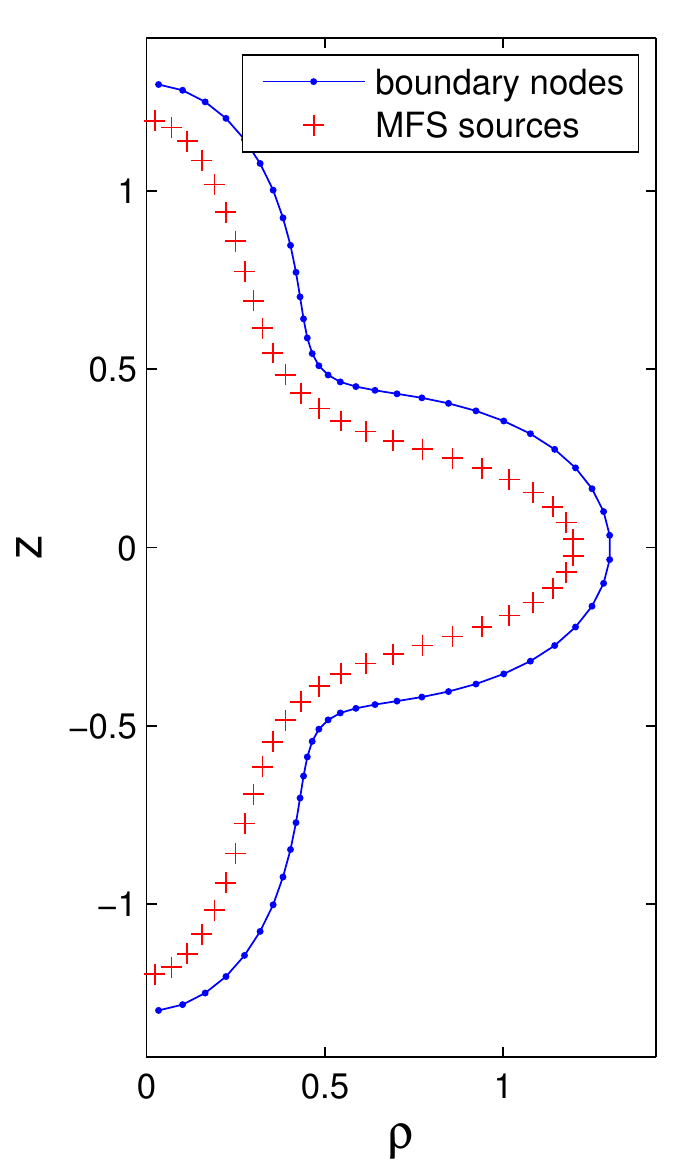}\qquad\qquad
\pig{1.5in}{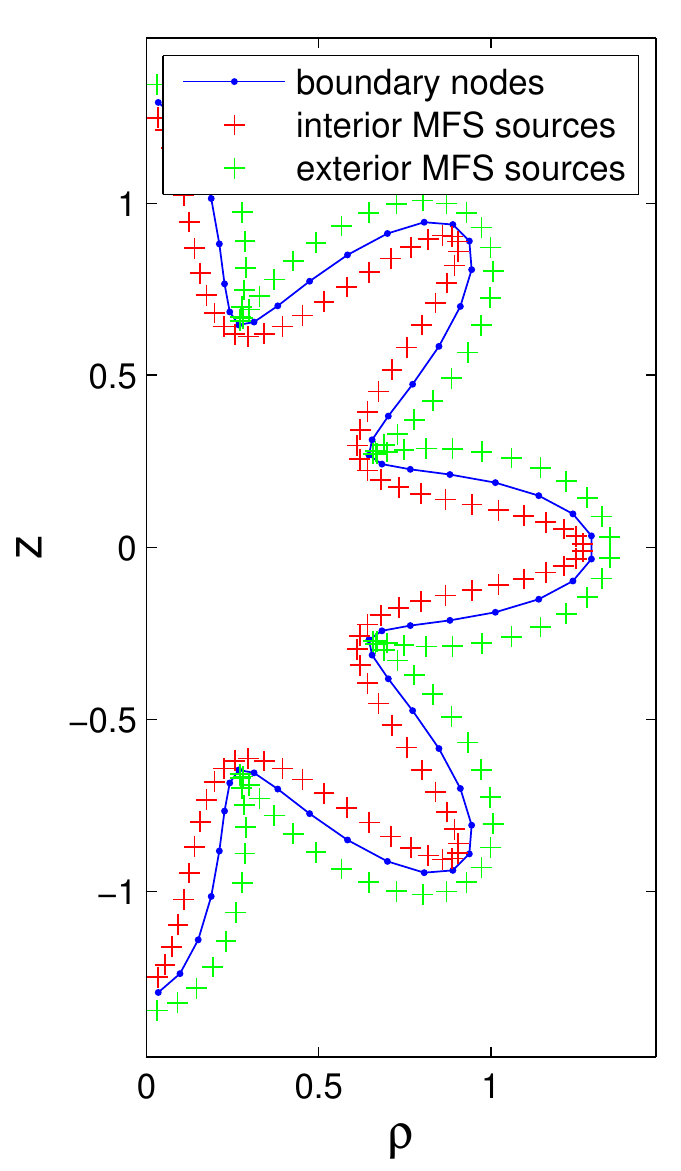}\qquad\qquad
\pig{1.6in}{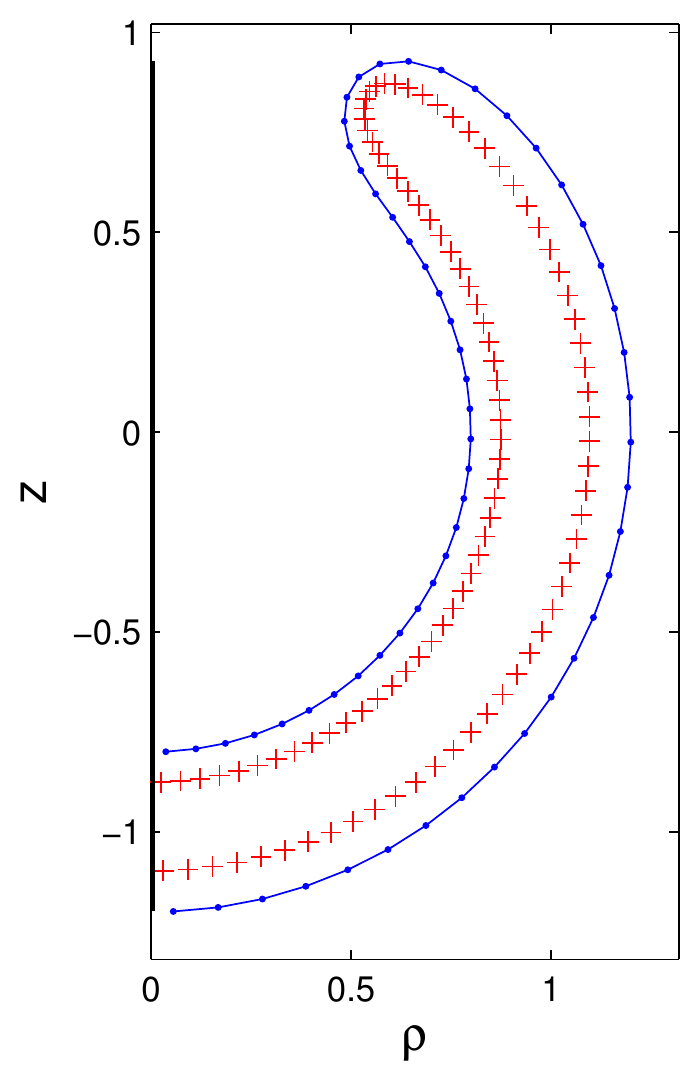}
\ca{
Obstacle bodies of revolution tested in this work.
Above is the 3D surface, and below the corresponding
2D generating curve and $N$ MFS points (with $M\approx 1.2 N$ boundary points).
Color above shows the real part of the incident wave $\ui$
at the highest frequency tested $k=40$
in \sref{s:borconv}, restricted to the surface.
From left to right:
(a) ``Smooth'' shape given in polar coordinates
in the $\rho$-$z$ plane by $r(\theta) = 1 + 0.3 \cos 4 \theta$.
The $N=50$ MFS sources are shown with distance parameter is $\tau=0.1$.
(b) ``Wiggly'' shape $r(\theta) = 1 + 0.3 \cos 8 \theta$.
$\tau=\pm 0.03$. $N=100$.
%
(c) ``Cup'' shape given parametrically
in $0\le t\le \pi$
by $r(t) = 1 - a\erf[(t-\pi/2)/a]$,
$\theta(t) = b-a + 2(1-\frac{b-a}{\pi})s_a(t-\pi/2)$
where the half-thickness is $a=0.2$ and opening half-angle $b=\pi/6$,
and the $a$-rounded abs-val function is
defined by $s_a(x):= (c/\sqrt{\pi})e^{-x^2/a^2} + \erf(x/a)$.
$\tau=0.05$. $N=100$.
}{f:shapes}
\efi

\bfi 
\bc
(a)\raisebox{-2in}{\ig{height = 2.0 in}{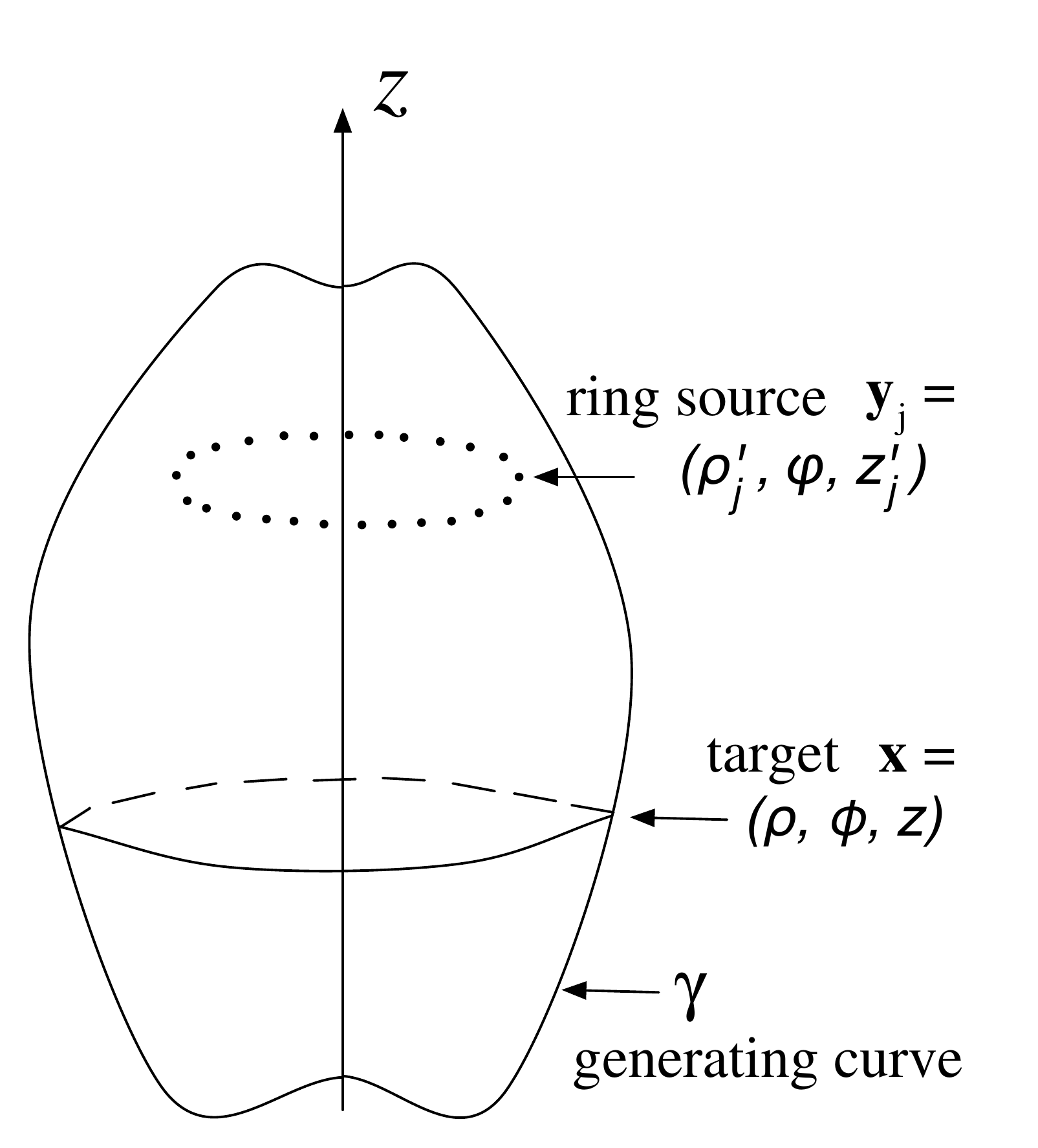}}
\quad
(b)\raisebox{-2in}{\ig{height = 2.0 in}{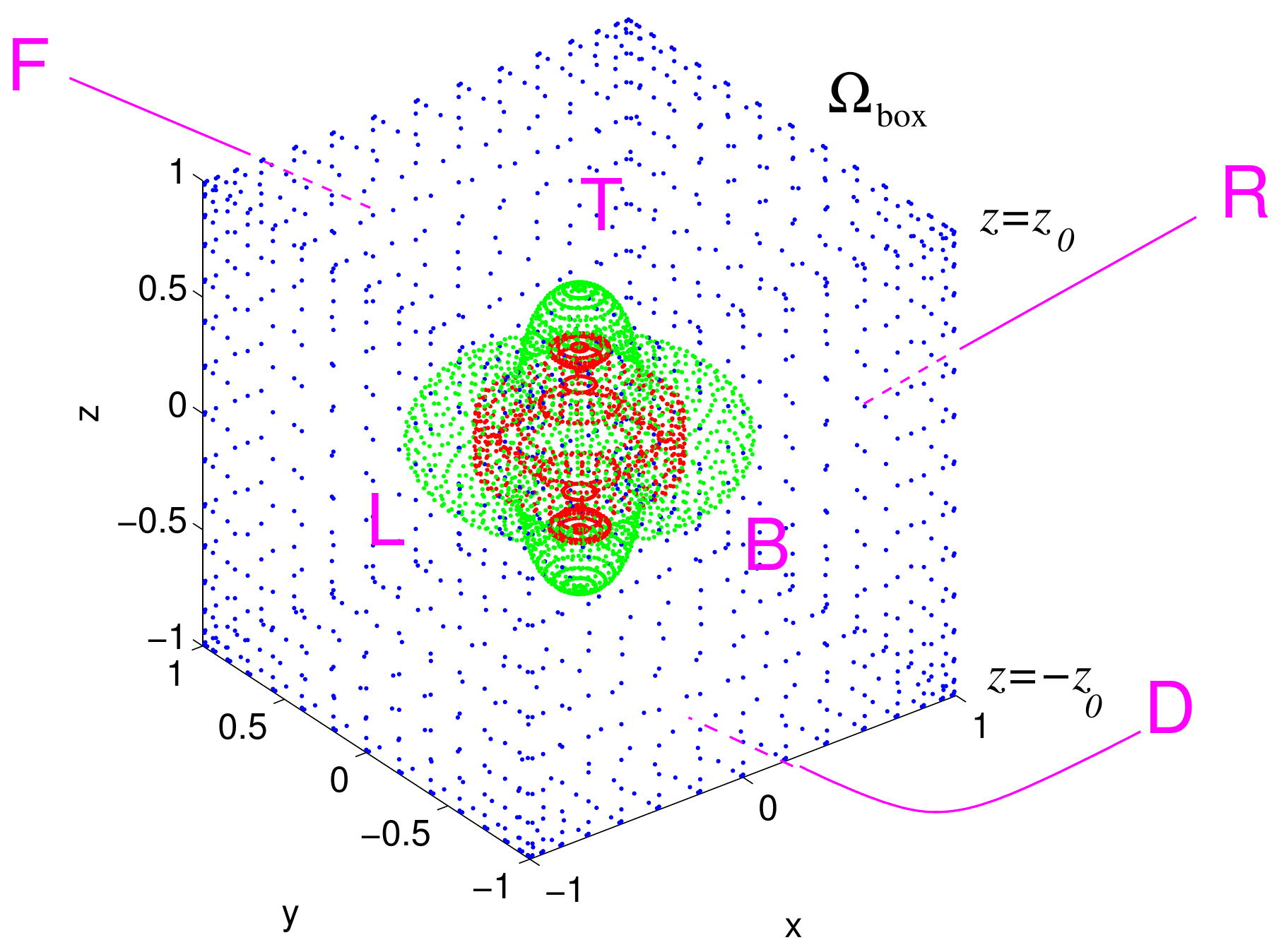}}
\ec
\ca{(a) Illustration of MFS
for the exterior axisymmetric BVP showing a ``ring charge'' (dotted circle,
where dots show approximation by individual point sources),
and target ring (solid circle).
(b) Geometry for periodization scheme:
Surface points (green), MFS source points (red),
and box wall discretization points (blue).
}{f:bor}
\efi

\section{Method of fundamental solutions in the axisymmetric setting}
\label{s:mfs}


In this section we present the MFS for scattering from an {\em isolated}
axisymmetric obstacle, which will form a key part of the periodic solver.
We present the Neumann case first, and then
explain how the transmission case differs.
Thus we care to solve the BVP given by \eqref{helm}
in $\RR^3 \backslash \overline{\Omega}$,
boundary conditions \eqref{neu},
and the usual 3D Sommerfeld radiation condition,
\be
\frac{\partial u}{\partial r} - iku = o(r^{-1}),
\qquad r := |\xx| \to \infty
\label{somm}
\ee
where the convergence implied by the little ``$o$'' is uniform in angle.
We exploit standard separation of variables
to represent the incident $\ui$ and scattered wave solution
$u$ as a sum of azimuthal Fourier modes.
Each mode results in an independent linear system.

Let the generating curve $\gamma$
for the body of revolution about the $z$-axis
be parametrized by the smooth functions $(\rho(t),z(t))$ in the $\rho$-$z$ plane, with $t\in[0,\pi]$.
The ``speed function'' $s(t) = \sqrt{(\rho'(t))^2 +(z'(t))^2}$ is
non-vanishing.
Note that the surface must also be smooth at $t=0$ and $t=\pi$, ie
$z'(0)=z'(\pi) = 0$.
For example, the generating curves we test are shown in the lower
part of \fref{f:shapes};
note that (c) is not formally smooth at $t=0$ and $\pi$, but is
smooth to within double-precision rounding error because of the
exponentially small deviation of the erf (error function) from $\pm 1$
at large arguments.

The right-hand side surface data is \eqref{neu},
$f = -\partial \ui/\partial n$,
which we approximate by the $P$-term truncated Fourier series
\be
f(\rho,z,\phi) \approx \snp \hat{f}_n(\rho,z) e^{in\phi}~, \qquad (\rho,z)\in\gamma,
\ee
with coefficients
\be
\hat{f}_n(\rho,z) = \mintc f(\rho,z,\phi) e^{-in\phi} d\phi~,
\qquad n\in\mathbb{Z}~,
\label{fhat}
\ee
whose accurate numerical evaluation we present shortly.
$P$ is even, and the asymmetry of the $\pm P/2$ terms will have
no significant effect.
The MFS source points $\{ (\rho'_j, z'_j)\}_{j=1}^N$ live in the $\rho$-$z$ plane,
inside the curve $\gamma$; their location choice is discussed in the
next section.
The rotation of the $j$th source point an angle $\varphi$ about the $z$-axis
is denoted by
$$
\yy_j(\varphi) := (\rho'_j,\varphi,z'_j)
$$
in cylindrical 
coordinates.
Our representation for the scattered potential will be in terms of
Fourier mode
Helmholtz ``ring sources'',
defining for the $n$th mode evaluated at target $\xx$,
\be
\Phi_{nj}(\xx) \; := \;
\mintc G_k(\xx,\yy_j(\varphi)) e^{-in \varphi} d\varphi~,
\ee
recalling that $G_k$ is the free-space fundamental solution \eqref{G}.

Our ansatz for the scattered potential is then
\be
u(\xx) \;\approx\;
\sum_{j=1}^N \snp c_{nj} \Phi_{nj}(\xx)
\label{ubormfs}
\ee
where the complex unknowns $c_{nj}$ are stacked into vectors
$\mbf{c}_n := \{c_{nj}\}_{j=1}^N$ grouped by Fourier mode.
We write $\eta := [\mbf{c}_{-P/2+1}; \ldots;\mbf{c}_{P/2}]$ for the
$NP$-component column vector of all unknowns.

Imposing the boundary condition \eqref{neu} means, for all surface points
$\xx = (\rho,\phi,z)$ in cylindrical coordinates,
$$
\snp \sum_{j=1}^N c_{nj} \mintc \frac{\partial G_k}{\partial n_\xx}((\rho,\phi,z),\yy_j(\varphi)) e^{-in\varphi}
d\varphi
\; \approx \;
\snp \hat{f}_n(\rho,z) e^{in\phi}
~,\qquad (\rho,z)\in\gamma, \quad 0\le \phi< 2\pi
$$
By changing variable $\varphi$ to $\varphi-\phi$, and using orthogonality
of the modes $e^{in\phi}$ it is
easy to derive that, for each mode $n$ separately,
\be
\sum_{j=1}^N c_{nj} A'_n(\rho,z;\rho'_j,z'_j) \; \approx \; \hat{f}_n(\rho,z)
~,\qquad (\rho,z)\in\gamma
\label{ringbc}
\ee
should hold, where the target normal-derivative of the $n$th ``ring kernel'' from source point
$(\rho',z')$ to target $(\rho,z)$ (both lying in the $\rho$-$z$ plane) is
\be
A'_n(\rho,z;\rho',z') := \mintc \frac{\partial G_k}{\partial n_\xx}((\rho,0,z), (\rho',\varphi,z'))
e^{-in\varphi} d\varphi
~,
\label{ringkernelderiv}
\ee
where $n_\xx$ is the normal to $\gamma$ at $(\rho,z)$.
Finally, we enforce \eqref{ringbc} at a set of $M$ collocation
points $\{(\rho_m,z_m)\}_{m=1}^M$ on $\gamma$
to give the set of $P$ independent rectangular linear systems
\be
\sum_{j=1}^N A'_n(\rho_m,z_m;\rho'_j,z'_j) c_{nj}\;=\; \hat{f}_n(\rho_m,z_m)
~,\qquad m=1,\ldots,M~,
\quad n = -P/2+1,\ldots,P/2~.
\label{linsys}
\ee
Typically $M$ must be slightly larger than $N$ to ensure accurate
enforcement of the boundary condition.
As is standard with the MFS, these systems are
highly ill-conditioned, but if the sources are well chosen, are
numerically consistent and possess a small coefficient norm
$\|\mbf{c}_n\|$.
Assuming this norm is $\bigO(1)$, if a backward-stable least-squares
solve is used, machine precision accuracy can be reached
despite the ill-conditioning \cite{mfs}.
For each $n$ independently the system is now solved in this way.

The set of linear systems may be written as
\be
	\begin {bmatrix}
	A'_{-\frac{P}{2}+1}& &  \\
	 & \ddots & \\
 	&&A'_{\frac{P}{2}}
	\end {bmatrix} 
 \begin {bmatrix} 
        \mbf{c}_{-\frac{P}{2}+1} \\
         \vdots \\
        \mbf{c}_{\frac{P}{2}}
\end{bmatrix}  =    \begin {bmatrix} 
        \hat{f}_{-\frac{P}{2}+1} \\
         \vdots \\
         \hat{f}_{\frac{P}{2}}
\end{bmatrix}
~, \qquad\mbox{ summarized by }\quad
A'\eta = \hat{f}~,
\label{linsysb}
\ee
where $A'$ has diagonal block structure, each of the $P$ diagonal blocks
being a dense ill-conditioned
rectangular matrix of size $M$ by $N$.
Each of the $P$ systems will be solved independently in the least-squares sense
using standard dense direct methods based on the QR decomposition
(we use MATLAB's {\tt mldivide}).
The only remaining task is to fill their matrix elements and right-hand
side vectors.

Numerical evaluation of the RHS $\hat{f}_n(\rho_m,z_m)$ for each curve point $m$
is done by applying the $q$-node
periodic trapezoid rule quadrature \cite[Sec.~12.1]{LIE}
to \eqref{fhat}, i.e.,
\be
\hat{f}_n(\rho,z) \approx \frac{1}{q} \sum_{l=1}^q f(\rho,2\pi l/q,z) e^{-2\pi inl/q}
= 
({\cal F}^{-1}\mbf{f})_n~,
\qquad \mbox{ where }
\;\mbf{f}:= \{f(\rho,2\pi l/q,z)\}_{l=1}^q~,
\qquad n\in\mathbb{Z}~,
\label{fhatquad}
\ee
where ${\cal F}$ is the $q$-point discrete Fourier transform matrix.
The error in this quadrature formula (``aliasing'' error)
can be bounded by the sum of the magnitudes of the
exact Fourier coefficients $\hat{f}_n$ for which $|n|\ge q/2$
\cite[(3.10)]{PTRtref}.
Since, for an accurate BVP solution, $P$ must already be large enough that
coefficients are small beyond $P/2$, we may choose $q=P$
without losing accuracy due to quadrature in \eqref{fhatquad}.
Note that \eqref{fhatquad}
can be evaluated for all $-P/2<n\le P/2$ at once using a single
FFT in $\bigO(P \log P)$ time.
Since $\ui$ is analytic and the surface smooth, the Fourier data
$\hat{f}_n$ decays super-algebraically with $|n|$, and
thus we expect similar convergence with respect to $P$.

We use a similar idea to evaluate matrix elements of $A'_n$,
which we now describe.

\bfi 
\bc
\vspace{-2ex}
(a)\raisebox{-2in}{\ig{height = 2.0 in}{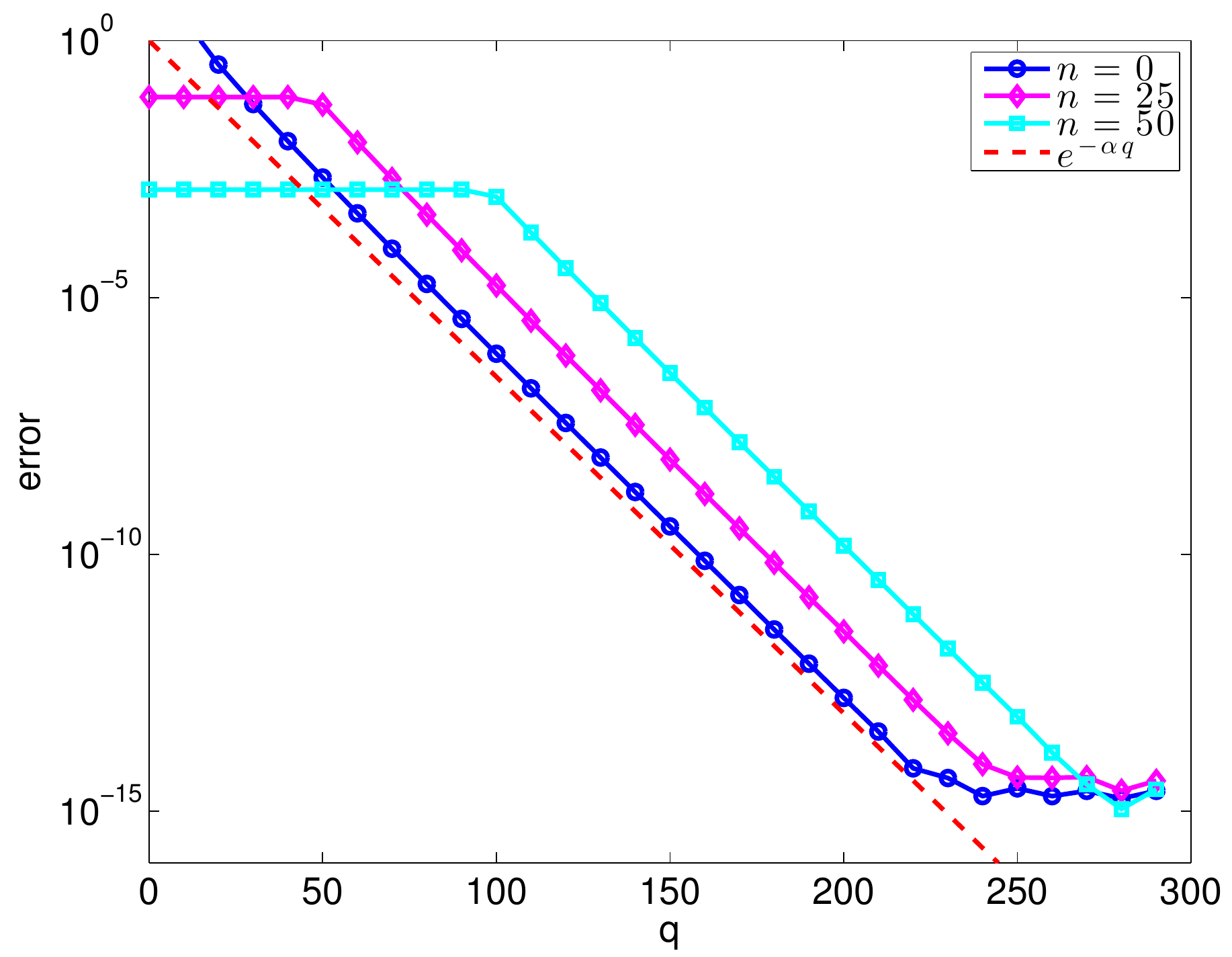}}
\qquad
(b)\raisebox{-2in}{\ig{height = 2.0 in}{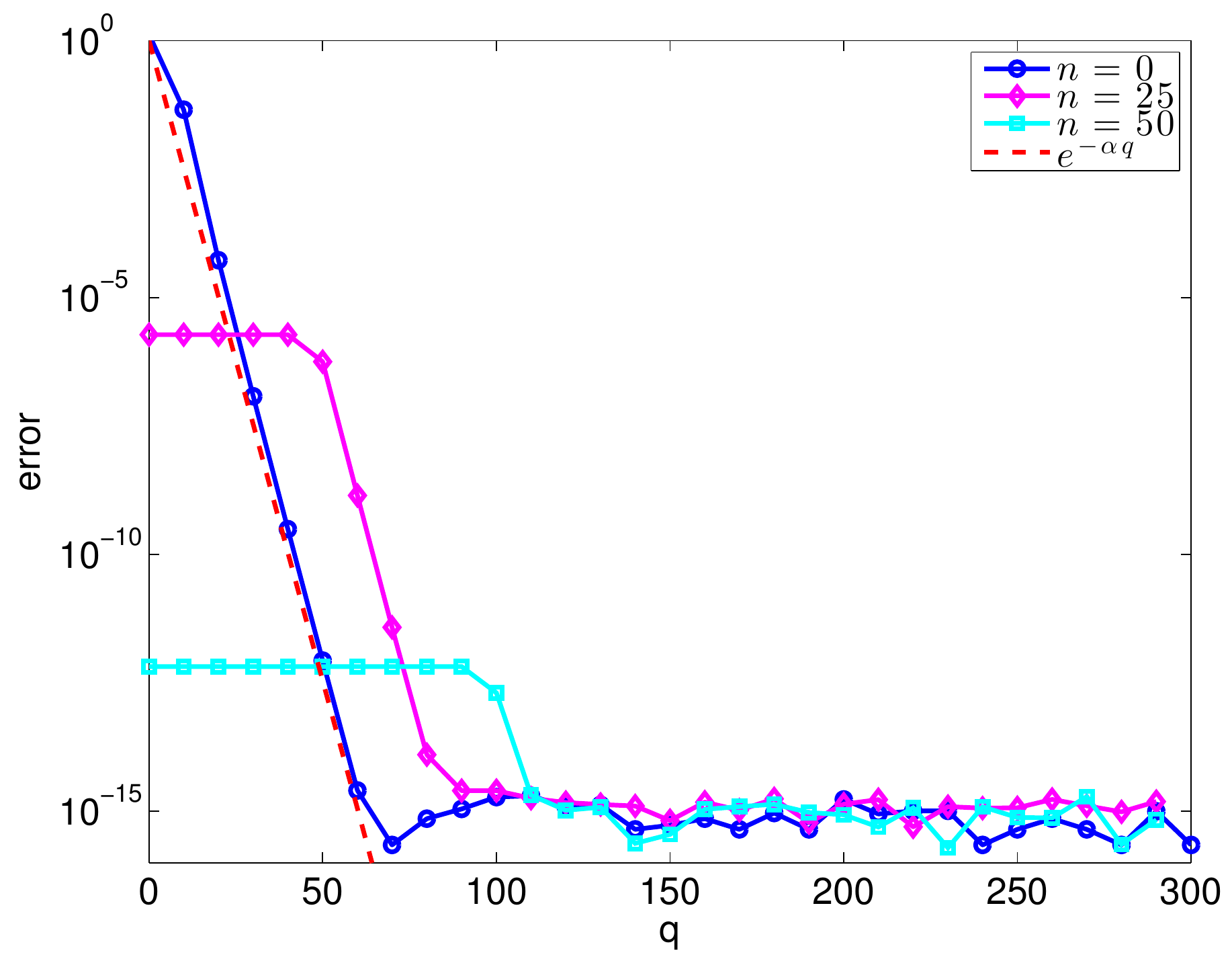}}
\ec
\ca{Convergence of the absolute error in 
evaluating the Helmholtz ring kernel \eqref{ringkernel},
for the various $n$ shown in the caption with $k = 10$.  
The shape we used is the one shown
in \fref{f:shapes}(a) with $N = 200, \tau = 0.1$. 
The $q$-node periodic trapezoid rule is used.
(a) Small distance: target $(\rho,z)=(0.5, 0.49)$ and source $(\rho',z')=(0.45, 0.44)$.
(b) Large distance: target $(\rho,z)=(0.6, 0.2)$ and source $(\rho',z')=(0.45, 0.44)$.
In each case the red line shows exponential convergence with rate equal to the
supremum of the $\alpha$ predicted by Thm.~\ref{t:qconv}.
}{f:qconv}
\efi

\subsection{Evaluation of the axisymmetric Helmholtz ring kernel}
\label{s:eval}

The $m,j$ matrix element of $A'_n$ in the linear system \eqref{linsys}
is given by the target normal-derivative of the Helmholtz ring kernel
$$
A'_n(\rho_m,z_m;\rho'_j,z'_j)
 =  \intc \frac{\partial G_k}{\partial n_{m}}((\rho_m,0,z_m), (\rho_j',\varphi,z_j'))
e^{-in\varphi} d\varphi
~,
$$
where $n_m$ indicates the normal to $\gamma $ at the point $(\rho_m,z_m)$.
For the transmission BVP (\sref{s:trans}) we will also need the
ring kernel value matrix element
\be
A_n(\rho_m,z_m;\rho'_j,z'_j)
 :=  \intc G_k((\rho_m,0,z_m), (\rho_j',\varphi,z_j'))
e^{-in\varphi} d\varphi
~.
\label{ringkernel}
\ee
Analytic series for this kernel are presented by Conway--Cohl \cite{cohl},
and evaluation methods for them in the BIE setting,
where the target may come very close to the source, are given in
\cite{Young,hao3daxi,helsing_axi,helsingIEEE15}.
However, in the MFS setting,
the target $(\rho,z)$ is separated from the source $(\rho',z')$
by a distance of at least a few times the quadrature node spacing
(see \sref{s:loc}),
and hence the periodic trapezoid rule quadrature
\be
A_n(\rho_m,z_m;\rho'_j,z'_j)
\; \approx\;
\frac{1}{q} \sum_{l=1}^q
G_k((\rho_m,0,z_m), (\rho_j',2\pi l/q,z_j'))
e^{-2\pi in l / q}
\label{ptr}
\ee
will be efficient for evaluating \eqref{ringkernel}.
In fact, we will have exponential convergence with a rate controlled
by the source-target separation in the $\rho$-$z$ plane,
as the following theorem shows.

\begin{thm} 
The $q$-node
periodic trapezoid rule applied to the Helmholtz ring kernel
or it's target normal derivative
is exponentially convergent with known rate $\alpha$.
Namely, for any $\alpha < \im \cos^{-1} \left (\frac { (\rho - \rho') ^2 + (z - z')^2} {2\rho \rho'} + 1\right) $,
there is a constant $C$ independent of $q$ such that
\be
\left|
\mintc G_k((\rho,0,z), (\rho',\varphi,z'))
e^{-in\varphi} d\varphi
-
\frac{1}{q} \sum_{l=1}^q
G_k((\rho,0,z), (\rho',2\pi l/q,z'))
e^{-2\pi in l / q}
\right|
\;\leq\;
Ce^{-\alpha q}
\ee
for all sufficiently large $q$,
and the same holds when $G_k$ is replaced by $\frac{\partial G_k}{\partial n_{\xx}}$,
$n_\xx$ being the normal to $\gamma$ at $(\rho,z)$.
\label{t:qconv}
\end{thm}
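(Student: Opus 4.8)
The plan is to recognize Theorem~\ref{t:qconv} as a special case of the classical fact (see e.g.~\cite[(3.10)]{PTRtref}, \cite[Sec.~12.1]{LIE}) that the $q$-node periodic trapezoid rule applied to a $2\pi$-periodic integrand which extends holomorphically to the open strip $\{\varphi:|\im\varphi|<a\}$, and is bounded on each closed sub-strip, converges geometrically at every rate $\alpha<a$. Under this reduction the only real content is: (i) to complexify $\varphi$ and show that the integrand $\varphi\mapsto G_k((\rho,0,z),(\rho',\varphi,z'))\,e^{-in\varphi}$, and likewise its target normal derivative, is $2\pi$-periodic and holomorphic precisely in the strip of half-width $a=\im\cos^{-1}\xi$, where $\xi:=\frac{(\rho-\rho')^2+(z-z')^2}{2\rho\rho'}+1$; and (ii) to run the standard aliasing estimate.

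First I would locate the singularities. Writing the ring point in Cartesian coordinates as $\yy_j(\varphi)=(\rho'\cos\varphi,\rho'\sin\varphi,z')$ and the target as $\xx=(\rho,0,z)$, one computes the entire, $2\pi$-periodic function
\be
w(\varphi) \;:=\; |\xx-\yy_j(\varphi)|^2 \;=\; \rho^2+\rho'^2+(z-z')^2 - 2\rho\rho'\cos\varphi~.
\ee
Since $G_k(\xx,\yy)=e^{ik\sqrt{w}}/(4\pi\sqrt{w})$ and $e^{-in\varphi}$ is entire, the only candidate singularities of the integrand in $\varphi$ are the zeros of $w$, i.e.\ the solutions of $\cos\varphi=\xi$, using the identity $\frac{\rho^2+\rho'^2+(z-z')^2}{2\rho\rho'}=\frac{(\rho-\rho')^2+(z-z')^2}{2\rho\rho'}+1=\xi$. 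Because $(\rho,z)\neq(\rho',z')$ (and $\rho,\rho'>0$) we have $\xi>1$, so these solutions are exactly $\varphi=2\pi m\pm i\,\mathrm{arccosh}\,\xi$, $m\in\ZZ$, all at distance $a=\mathrm{arccosh}\,\xi=\im\cos^{-1}\xi$ from the real axis.

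Next I would upgrade ``no singularities'' to genuine holomorphy on the open strip $S_a:=\{\varphi:|\im\varphi|<a\}$. As $w$ is holomorphic and nonvanishing on the simply connected set $S_a$, and $w>0$ on $S_a\cap\RR$ (there $w\ge(\rho-\rho')^2+(z-z')^2>0$), it admits a unique single-valued holomorphic square root $R(\varphi)$ on $S_a$ with $R>0$ on $\RR$; moreover $R$ is $2\pi$-periodic since $w$ is and $S_a$ is connected. Then $e^{ikR(\varphi)}/(4\pi R(\varphi))$ is holomorphic on all of $S_a$ (no pole, since $R\neq0$ there) and $2\pi$-periodic, and it agrees with the physical ring-kernel integrand on $\RR$, hence equals its analytic continuation by uniqueness. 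For the target normal derivative, $\partial G_k/\partial n_\xx=n_\xx\cdot\nabla_\xx G_k$ is, at fixed $\xx$ and fixed unit normal, a fixed linear combination of the $\partial G_k/\partial x_i$, each of which depends on $(\xx,\yy)$ only through $w$ (a polynomial in the coordinates times powers of $R$ times $e^{ikR}$); so it is holomorphic and $2\pi$-periodic on the same strip $S_a$.

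Finally I would invoke the aliasing estimate. Fix any $a'<a$ and let $g$ denote either $2\pi$-periodic integrand; it is holomorphic on $S_a$ and bounded by some constant $C_{a'}$ (depending on $n$, $k$, $a'$ and the geometry but not on $q$) on $\{|\im\varphi|\le a'\}$. Shifting the contour in $\hat g_\ell:=\mintc g(\varphi)e^{-i\ell\varphi}\,d\varphi$ down by $a'$ for $\ell>0$ and up by $a'$ for $\ell<0$ gives $|\hat g_\ell|\le C_{a'}e^{-a'|\ell|}$; the $q$-node periodic trapezoid rule error equals the aliasing sum $\sum_{m\in\ZZ\setminus\{0\}}\hat g_{mq}$, hence is at most $\sum_{m\neq0}C_{a'}e^{-a'|m|q}=\frac{2C_{a'}e^{-a'q}}{1-e^{-a'q}}\le Ce^{-a'q}$ once $e^{-a'q}\le\frac12$. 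Since $a'<a=\im\cos^{-1}\xi$ is arbitrary and the factor $e^{-in\varphi}$ only contributes a $q$-independent constant, this is exactly the claimed bound with $\alpha=a'$. The one step needing genuine care is the branch argument — making sure the complexified kernel continues to a \emph{single-valued} holomorphic function across the whole strip rather than merely locally — but this follows cleanly from simple-connectedness of $S_a$ together with the elementary computation $\cos\varphi=\xi$ for the zeros of $w$; everything else is the textbook trapezoid-rule estimate.
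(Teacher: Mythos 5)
Your proposal is correct and follows essentially the same route as the paper: complexify $\varphi$, locate the singularities of the integrand via the zeros of the squared distance $\rho^2+\rho'^2+(z-z')^2-2\rho\rho'\cos\varphi$ (i.e.\ $\cos\varphi=\xi$, giving the strip half-width $\mathrm{arccosh}\,\xi=\im\cos^{-1}\xi$), and then invoke exponential convergence of the periodic trapezoid rule for periodic analytic integrands. The only differences are that the paper cites Davis' theorem for that last step where you re-derive it via the aliasing sum, and that you spell out the single-valued holomorphic square root on the simply connected strip, a detail the paper leaves implicit; both are fine.
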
 
\begin{proof}
The theorem of Davis \cite{davis59} states that
the periodic trapezoid rule is exponentially convergent
for a periodic analytic integrand.
Specifically, for the periodic interval $\varphi \in [0,2\pi)$,
if an integrand $f(\varphi)$ can be analytically continued off the real axis
to a bounded $2\pi$-periodic analytic function in the strip
$|\im \varphi|\le \alpha$, then
$$
\left| \mintc f(\varphi) d\varphi - \frac{1}{q} \sum_{l=1}^q
f(2\pi l /q) \right|
\; \le \;
C e^{-\alpha q}
$$
holds for some constant $C$ and all sufficiently large $q$.
It only remains to understand the region of analyticity of
the integrands
$$
f(\varphi) := G_k(\xx,\yy) e^{-in\varphi}
\qquad \mbox{ and } \quad
f(\varphi) := \frac{\partial G_k}{\partial n_\xx}(\xx,\yy) e^{-in\varphi}~,
\qquad
\mbox{ where } \xx=(\rho,0,z), \quad \yy = (\rho',\varphi,z'),
$$
with respect to the variable $\varphi$.
The Green's function \eqref{G} is analytic unless the distance between
source and target vanishes, ie
$$
0 = |\xx-\yy| = \sqrt { (\rho - \rho' \text{ cos} \varphi)^2 + (\rho' \text{ sin} \varphi)^2+ (z-z')^2 }
~.
$$
Solving for $\varphi$ gives locations of the
singularities in the complex $\varphi$ plane,
$$\varphi = \cos^{-1}
\left (\frac { (\rho - \rho') ^2 + (z - z')^2} {2\rho \rho'} + 1\right)
~.
$$
These occur with $2\pi$ periodicity in the real direction, in pairs
symmetric about the real axis.
Thus the two integrands $f(\varphi)$ are analytic and bounded in any strip
$|\im \varphi|\le \al$ for $\al$ given in the statement of the theorem.
Applying the Davis theorem completes the proof.
\end{proof}

Note that in the limit of small separation, by expanding the cosine, the
maximum convergence rate guaranteed by Thm.~\ref{t:qconv} is
$$\alpha \approx \frac{\sqrt{(\rho-\rho')^2 + (z-z')^2}}{\rho}
~,
$$
that is, equal to ratio of the source-target separation in the $\rho$-$z$ plane
to $\rho$.

In \fref{f:qconv} we demonstrate Thm.~\ref{t:qconv} numerically
in a typical axisymmetric MFS setting.
Panel (a) shows the error convergence in evaluating \eqref{ringkernel},
i.e.\ one matrix element of $A_n$,
for a small source-target separation of 0.071.
The convergence rate is slow, independent of $n$, and
observed to match very closely the upper bound on the rate $\alpha$
predicted by the theorem.
Panel (b) shows a much larger distance, and
much faster convergence, again with rate independent of $n$
and as predicted by the theorem.
The results for the normal derivative ring kernel \eqref{ringkernelderiv}
are similar.

Notice that the value of $q$ after which converge starts depends on
the Fourier mode $n$,
and that, as $n$ increases this adds roughly $|n|$
to the $q$ value at which convergence to machine precision is reached.
We explain this as follows.
In the Davis theorem the constant $C$ asymptotic to
$4\pi\sup_{\im\varphi = \pm \alpha} |f(\varphi)|$, i.e.\ a bound
on the size of $f$ in the strip \cite[Thm.~12.6]{LIE}.
The factor $e^{-in\varphi}$ in the integrand contributes a maximum
value within the strip of $e^{\al |n|}$, which is the dominates $C$.
Thus around $|n|$ extra quadrature nodes are needed to reach the error
level for $n=0$.

\begin{rmk}
Since the individual terms in the quadrature sum \eqref{ptr} are
of size $O(1)$ for a typical source-target separations in
a unit-sized obstacle, we can get convergence to machine precision in
the absolute error,
but not the relative error, of each matrix element of $A'_n$ or $A_n$.
However, absolute error is the relevant quantity controlling
final accuracy since
the solution field is summed over Fourier modes \eqref{ubormfs}.
\end{rmk}

In the rest of this paper, we fix $q$ to the same value for filling
all matrix elements in $A'_n$ or $A_n$, determined by a convergence
test as shown in \sref{s:borconv}.
It would be possible to use that above theorem to choose $q$
differently to be more optimal for each matrix element (source-target
distance); however, since the matrix filling is not a large part
of the total solution time there would not be much benefit.
For evaluation of the solution potential \eqref{ubormfs} on other
periodic images of the obstacle, we find that a fixed value $q=P$ is adequate.

\bfi 
(a)\raisebox{-1.4in}{\ig{width=2in}{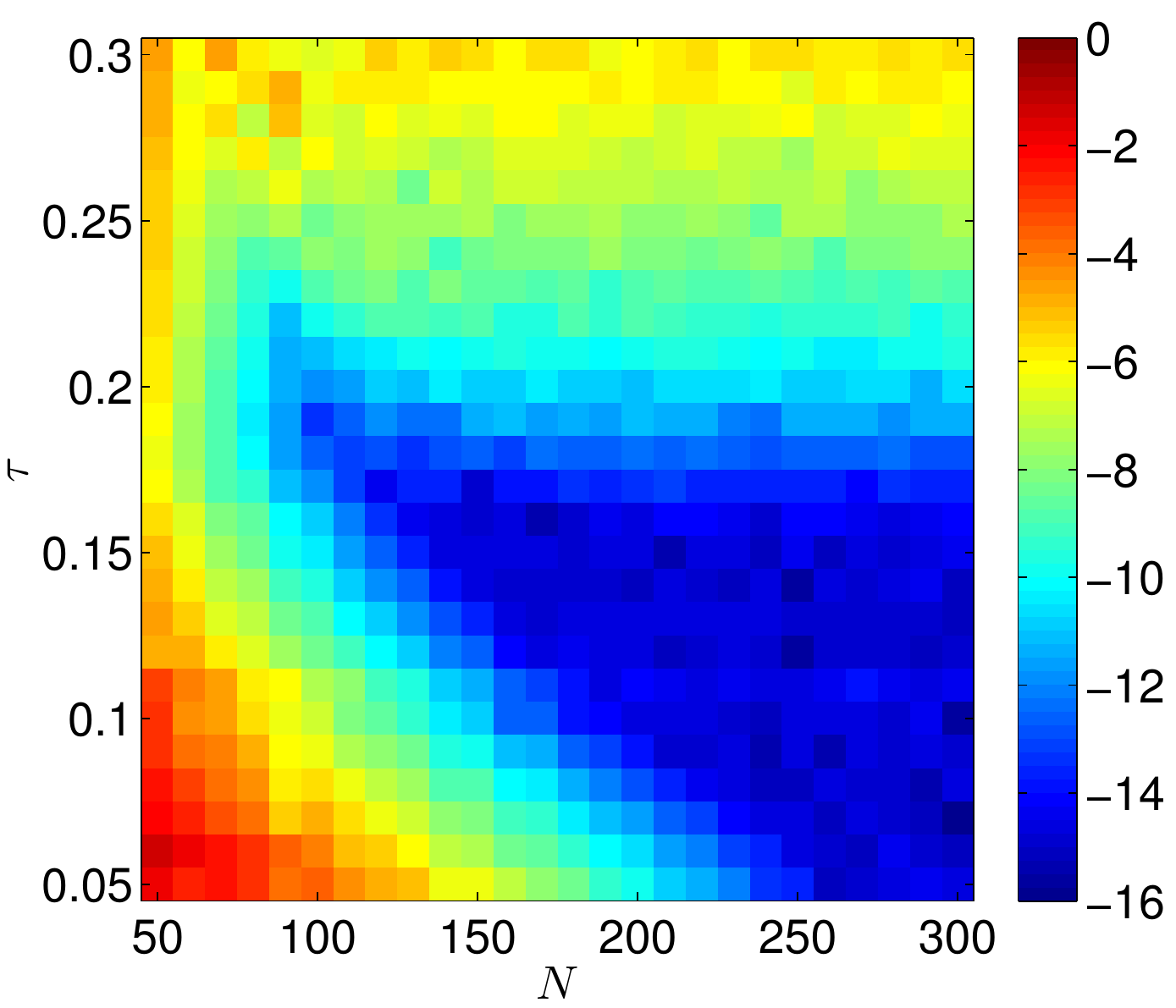}}
(c)\raisebox{-1.4in}{\ig{width=2in}{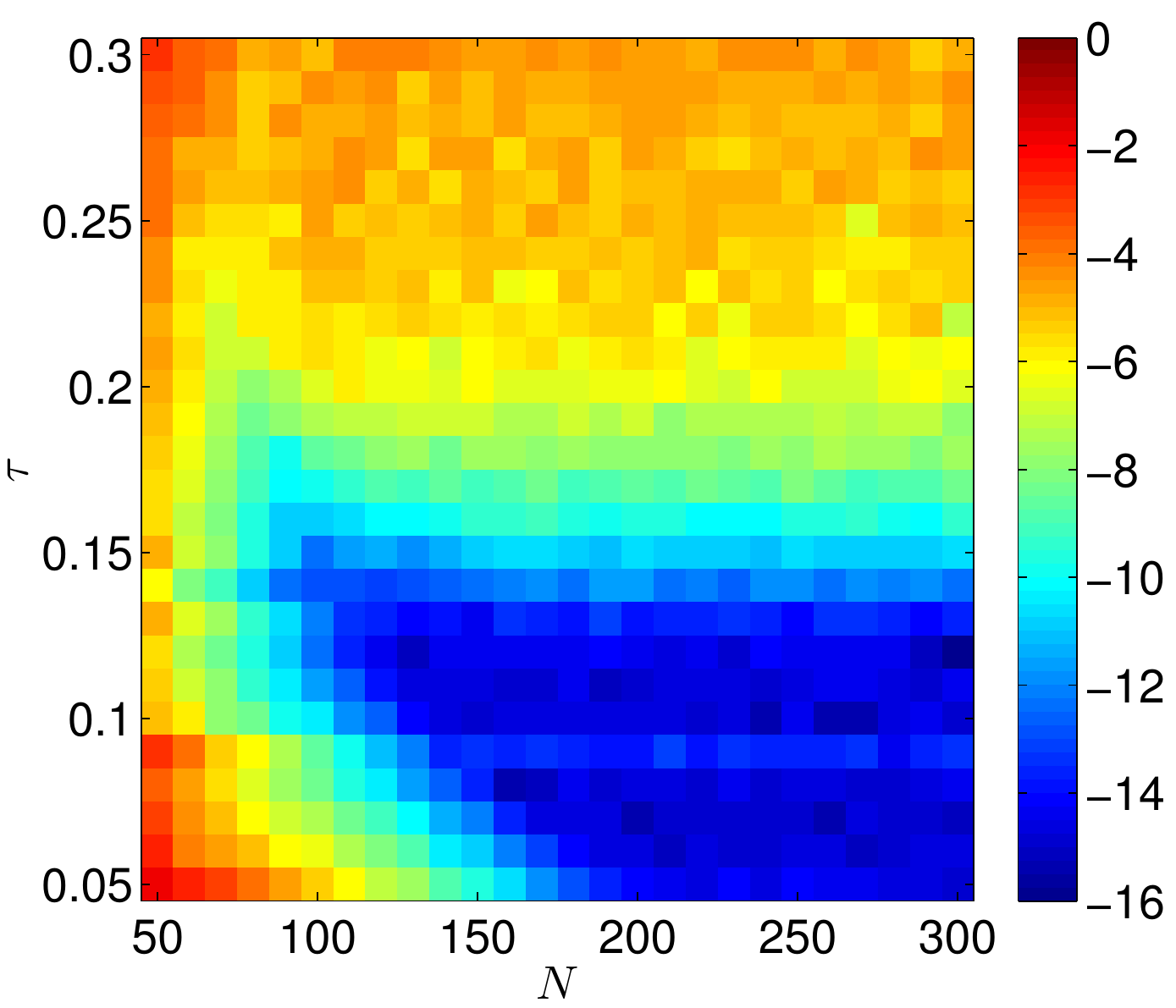}}
(e)\raisebox{-1.4in}{\ig{width=2in}{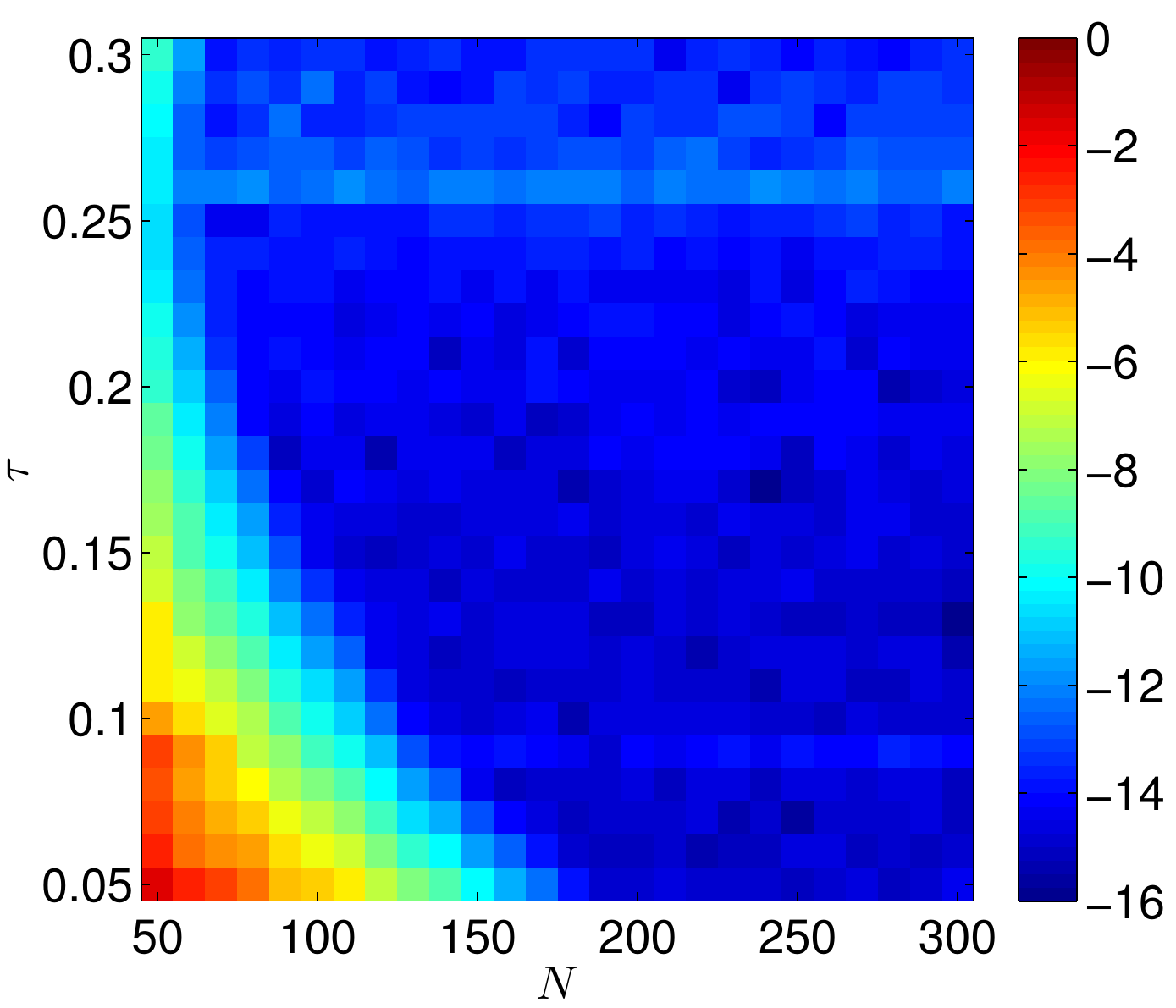}}
\\
(b)\raisebox{-1.4in}{\ig{width=2in}{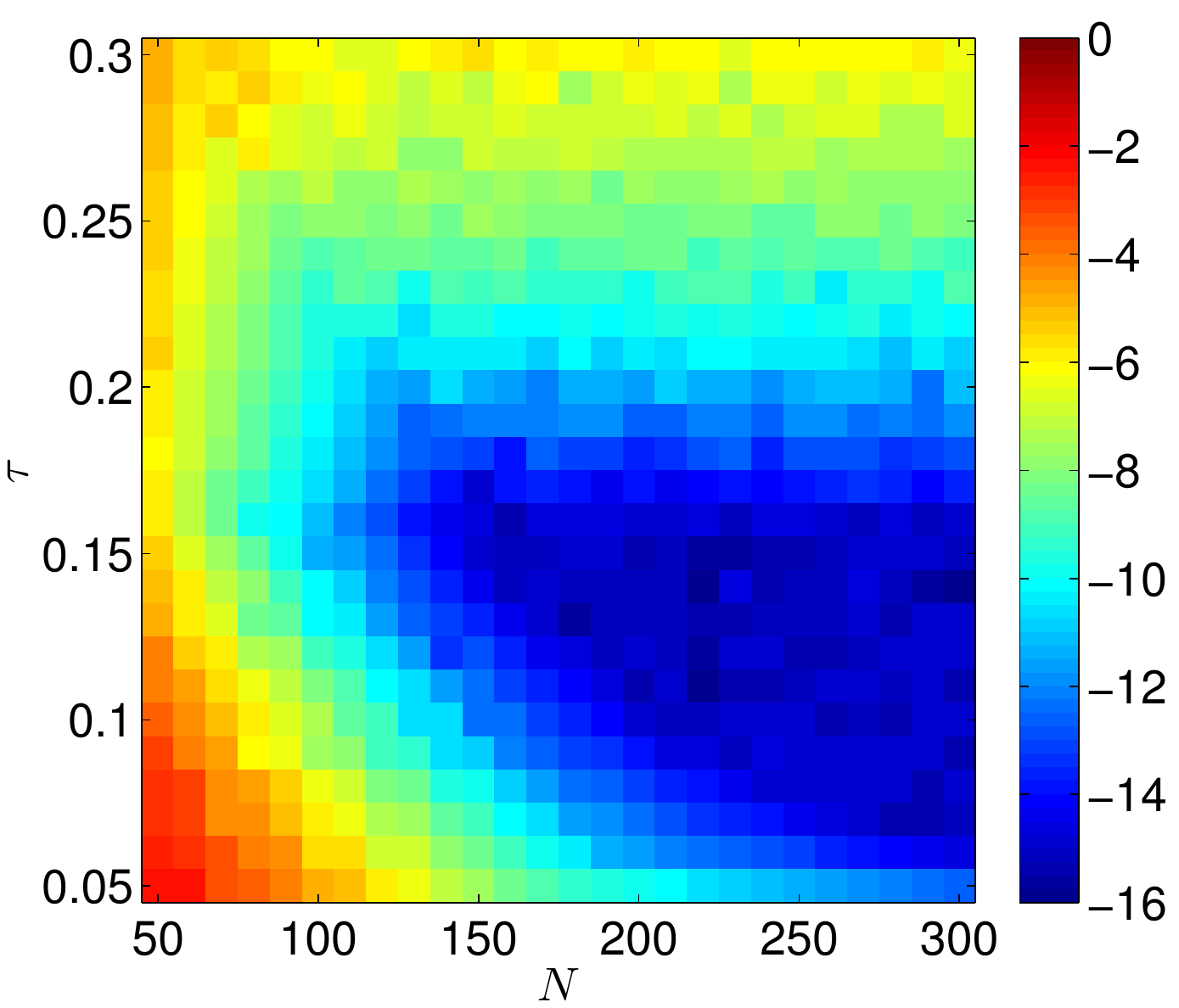}}
(d)\raisebox{-1.4in}{\ig{width=2in}{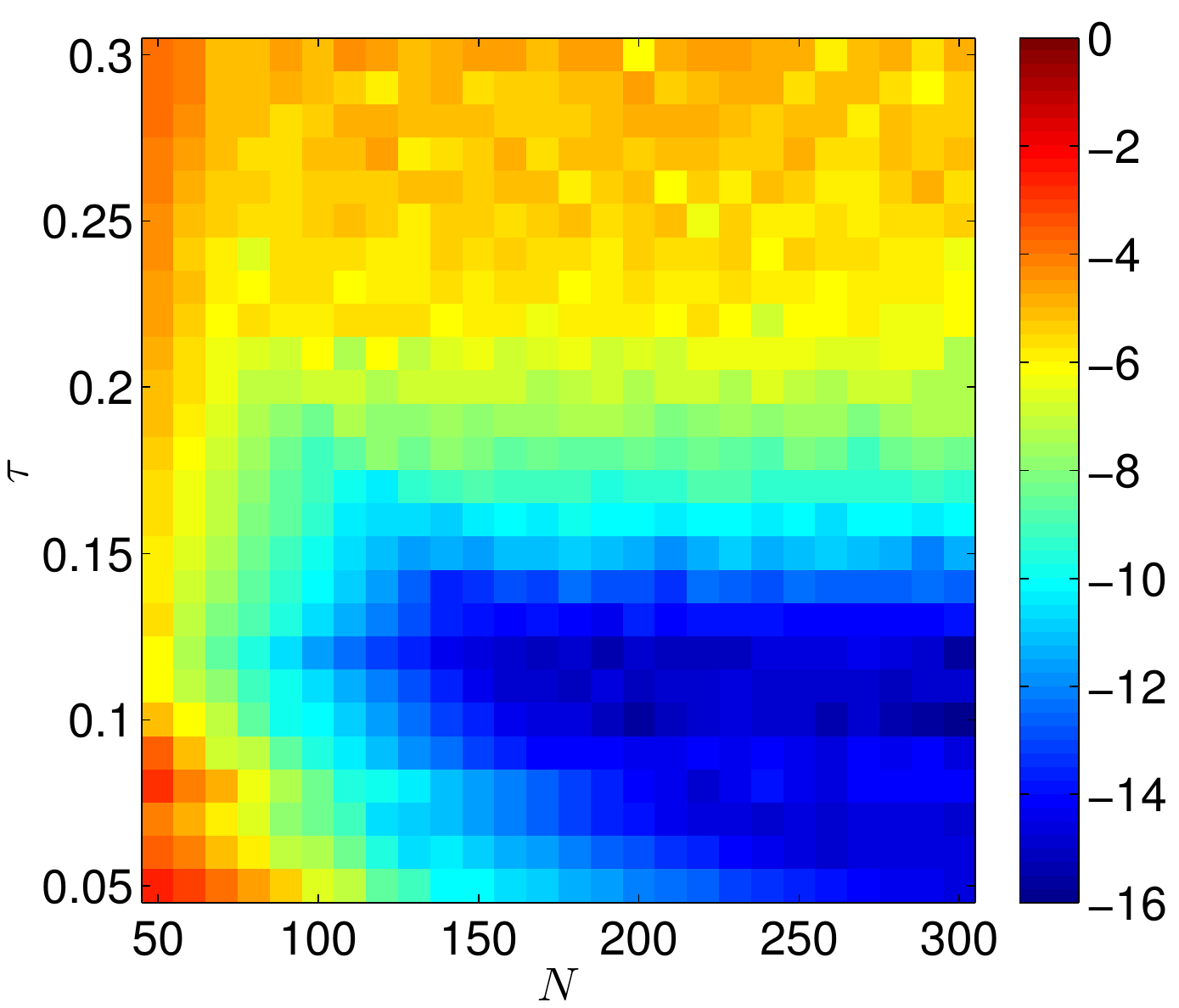}}
(f)\raisebox{-1.4in}{\ig{width=2in}{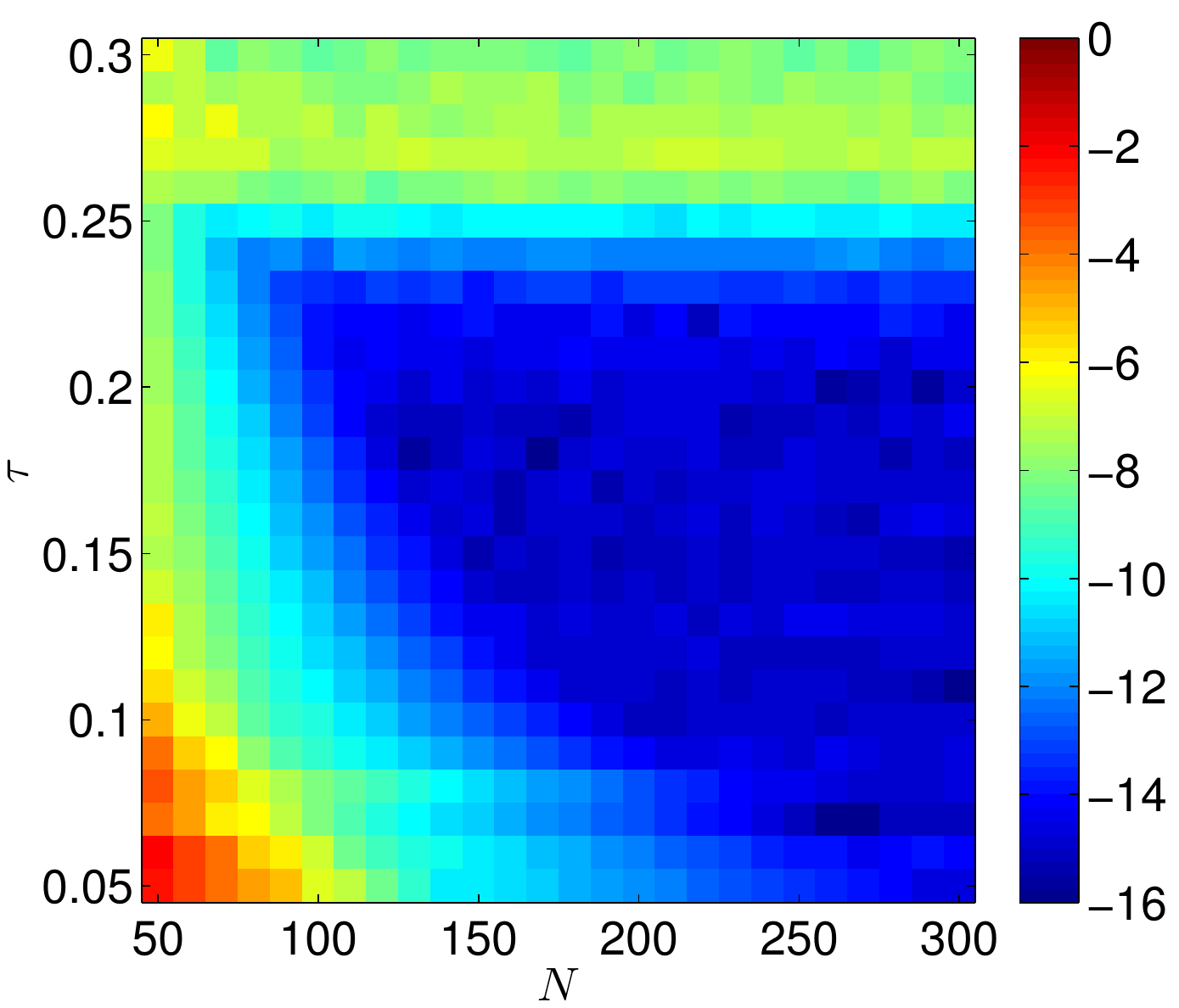}}
\ca{Convergence of error at a single distant point in the solution of Neumann Helmholtz scattering
problems at $k=10$ via the MFS for various source point displacement schemes,
for the boundary curve the ``smooth'' shape of \fref{f:shapes}(a),
$f(\theta) = 1 + 0.3 \cos 4 \theta$ for $\theta \in [0,\pi]$.
The top row of plots shows the error for solution of a 2D BVP using a boundary
curve reflected to form a closed curve $\theta \in [0,2\pi]$;
the bottom row shows error for the corresponding 3D axisymmetric BVP.
$N$ is the number of source points in $\theta \in [0,\pi]$.
(a) and (b): constant displacement from surface points along the normal vector.
(c) and (d): scaling the displacement in proportion to the ``speed'' $|s(t)|$.
(e) and (f): constant displacement in the imaginary parameter direction.
In each plot ``aliasing'' error due to sources too close to the boundary
dominates in the bottom left, and growth of error in the vertical direction
is caused by round-off due to growing coefficient norms $\|\mbf{c}\|$.
}{f:srcloc}
\efi
%
%

\subsection{Choice of source point locations}
\label{s:loc}

The performance of the MFS depends critically on the choice
of the curve $\Gm$ on which the
$N$ sources lie \cite{mfs_review,doicu}.
For analytic boundaries
there are strong theoretical results.
In this case,
Katsurada \cite{Ka90}
proved that, in exact arithmetic,
the MFS has exponential convergence for the 2D Dirichlet Laplace
($k=0$) interior BVP, with $M=N$
and points chosen equally spaced in the parametrization of the
curves $\Gamma$ and $\Gm$.
This was generalized to all analytic kernels
(thus including the Helmholtz kernel) by Kangro \cite{kangro2d},
and recently to the 3D Maxwell case when the source points $\{\yy_j\}$
are the nodes of an exponentially convergent quadrature on $\Gm$
\cite{kangro3d}
(presumably a similar proof would apply for Helmholtz).

However, in floating-point arithmetic another constraint arises,
associated with the exponentially large condition number of
the MFS system matrix.
Namely, the coefficient norm $\|\mbf{c}\|$ should remain small, i.e.\ $O(1)$.
It is conjectured that this happens if and only if
the exterior solution $u$ can be continued as a regular solution
to the PDE inside $\Omega$ up to and including $\Gm$,
in other words if $\Gm$ encloses all of the singularities in
the continuation of $u$ \cite[Conj.~12]{mfs}.
This is suggested by theory on the continuous
first-kind integral equation \cite[p.~1238]{Ky96} \cite[Thm.~2.4]{doicu},
and numerical results
where the singularities are known (via the Schwartz function) \cite{mfs}.
If $\Gm$ is placed too far from the surface, it will not enclose
these singularities, and exponential blowup of $\|\mbf{c}\|$ results,
causing rounding error which limits the solution error for $u$.

Now we compare the $N$-convergence of the error in our setting, for
various different methods for choosing source locations.
We use the notation that the sources
$\yy_j:=(\rho_j,z_j)$, $j=1,\ldots, N$ lying on a curve $\Gm$
are displaced from $N$ surface points $\xx_j:= (\rho(t_j),z(t_j))$
with parameters $t_j = \pi(j-1/2)/N$, $j=1\ldots,N$,
uniformly spaced on the generating curve $\gamma$.
The methods we compare are:
\ben
\item[(a)] displacement by a constant distance $\tau$ in the normal direction,
\item[(b)] displacement by a distance $\tau s(t_j)$ in the normal direction,
where $s(t)$ is the speed function, and
\item[(c)] displacement in the ``imaginary direction'' by complexification of
the boundary parametrization, i.e.\ $\yy_j:= (\rho(t_j+i\tau),z(t_j+i\tau))$.
This is a simplification of methods from \cite{mfs}.
\een
Note that all methods depend upon the parametrization,
and we assume that one is used for which $\{\xx_j\}$
provides a good quadrature scheme on $\gamma$ (combined with its reflection).


In \fref{f:srcloc} we compare these three methods for solving
the Neumann Helmholtz scattering problem in the smooth shape shown
in \fref{f:shapes}(a), using a dense direct linear 
In fact we compare the 2D case of plane-wave scattering from the
curve (closed by combining with its reflection about the $z$ axis),
against the results for the 3D axisymmetric case, 
using a dense direct solve for each Fourier mode.
Two conclusions stand out:
1) the 2D BVP is an excellent indicator of error performance in the
3D axisymmetric case,
and
2) the complexification method is at least as good as the
other methods at small $\tau$, and is more forgiving at large $\tau$.
Thus, in the rest of this work we choose the
complexification method.

The tested wavenumber $k=10$ is not high; we find that at $k=30$
the blow-up at large $\tau$ is more severe
even for the complexification method.
Thus we choose $\tau$ empirically so that it gives rapid $N$ convergence
but does not blow up at high $k$. This we assess via experimentation with
the 2D BVP, since is very rapid to run (taking a fraction of a second
per solve), and is demonstrated to reflect the 3D performance.
In future work we will present an automated method for choosing
$\tau$ and $N$.

\begin{rmk}
The monopole (``single-layer'') MFS representation that we use fails to
be a complete representation of exterior radiative potentials
if the surface on which the sources lie has an interior Neumann eigenvalue
equal to $k^2$ \cite[Sec.~2.1]{doicu}.
In practice, this can be detected by a poor boundary error ($\epsilon_1$
in \sref{s:borconv} below), and $\tau$ changed slightly. In our
experiments we have never had to make such a change.
\end{rmk}

\bfi 
(a)\raisebox{-1.5in}{\ig{width=2in}{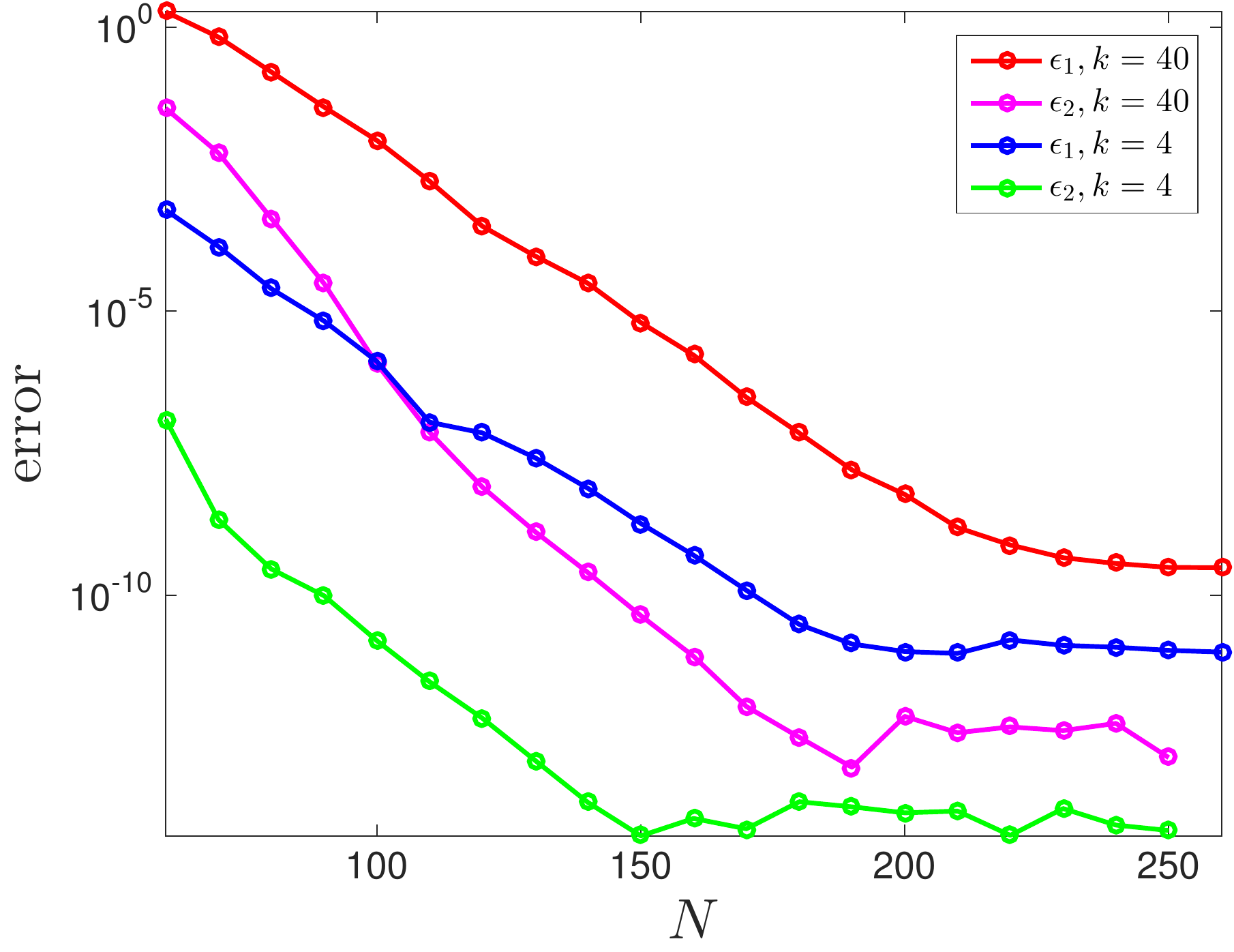}}
(d)\raisebox{-1.5in}{\ig{width=2in}{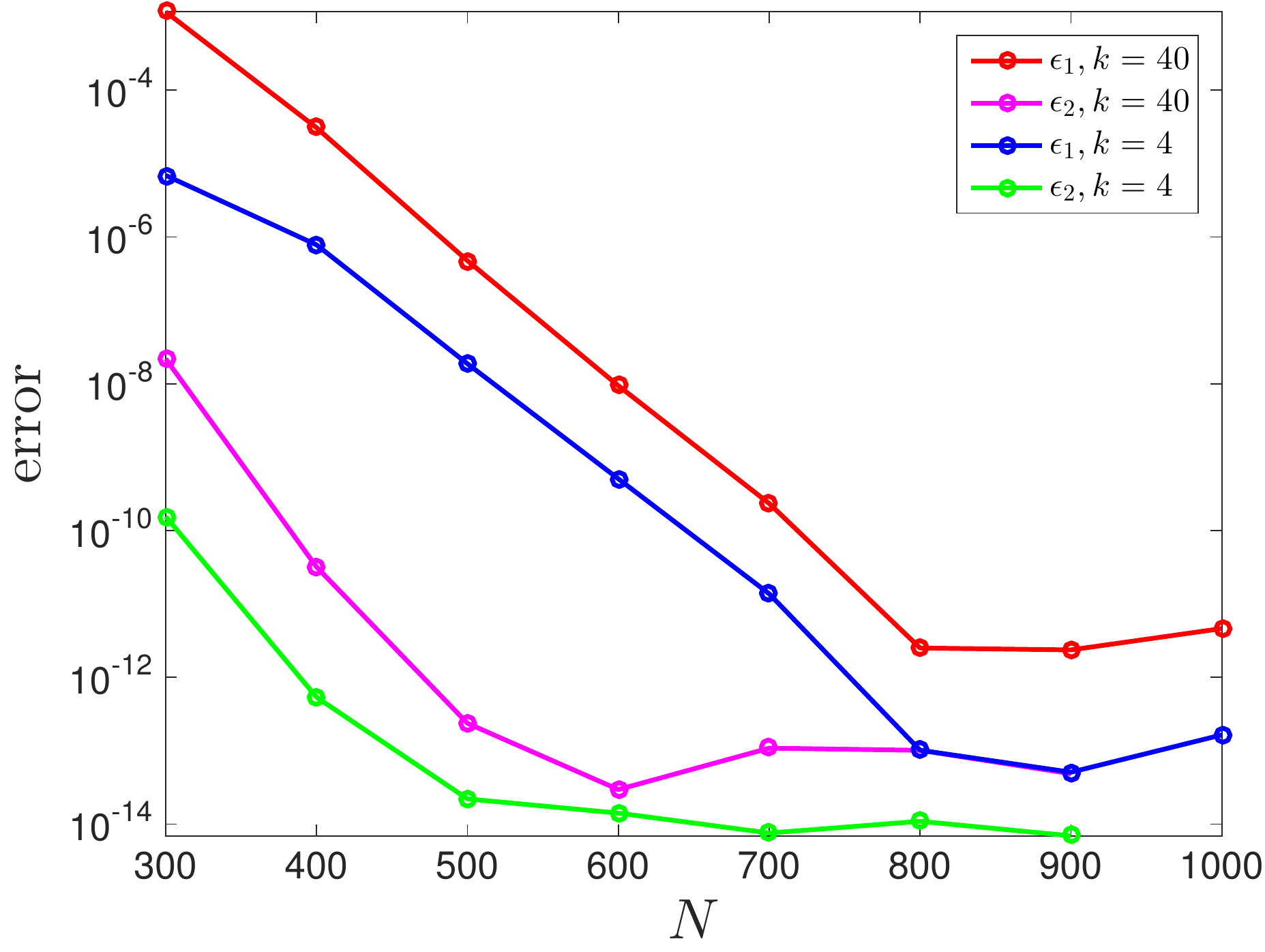}}
(g)\raisebox{-1.5in}{\ig{width=2in}{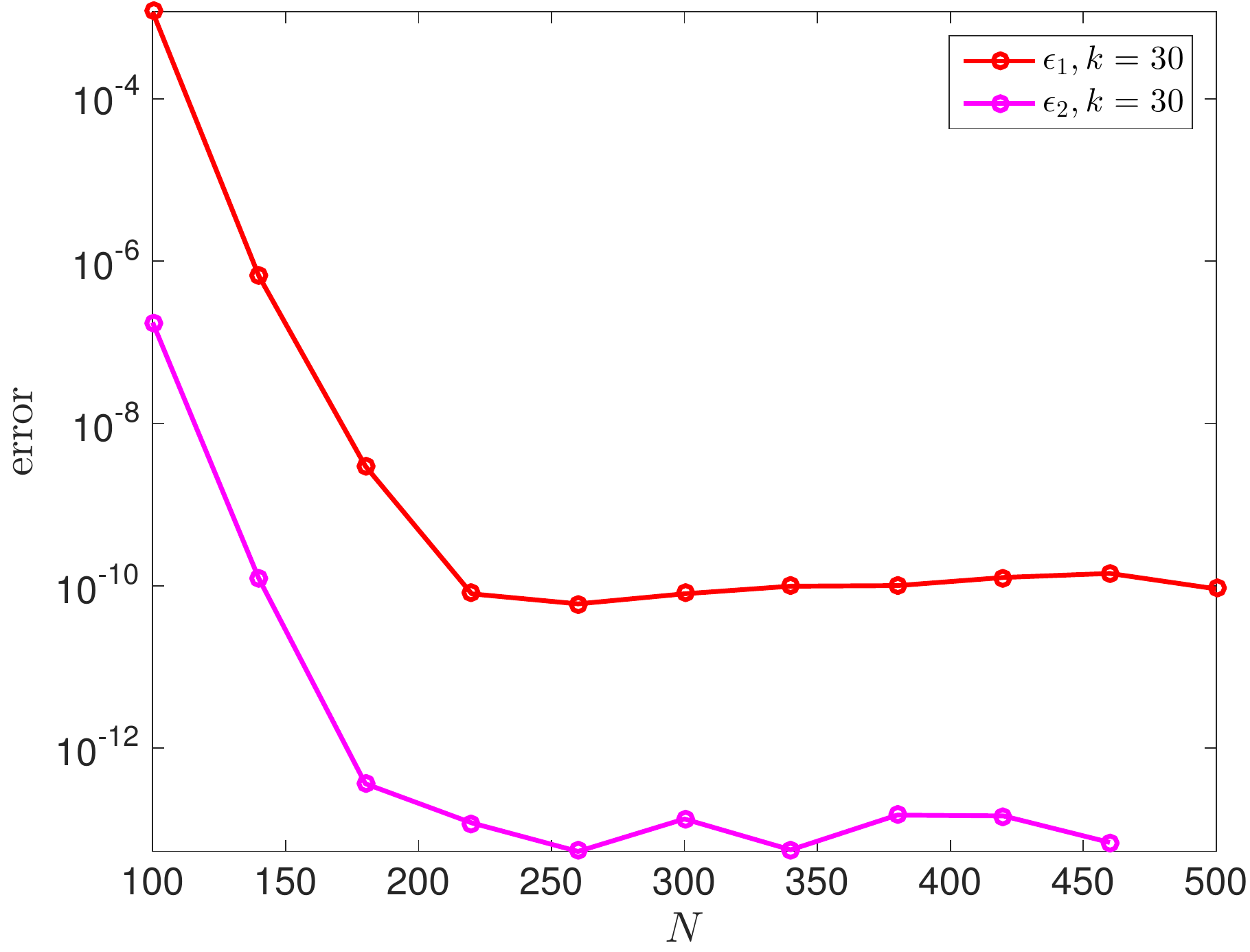}}
\\
(b)\raisebox{-1.5in}{\ig{width=2in}{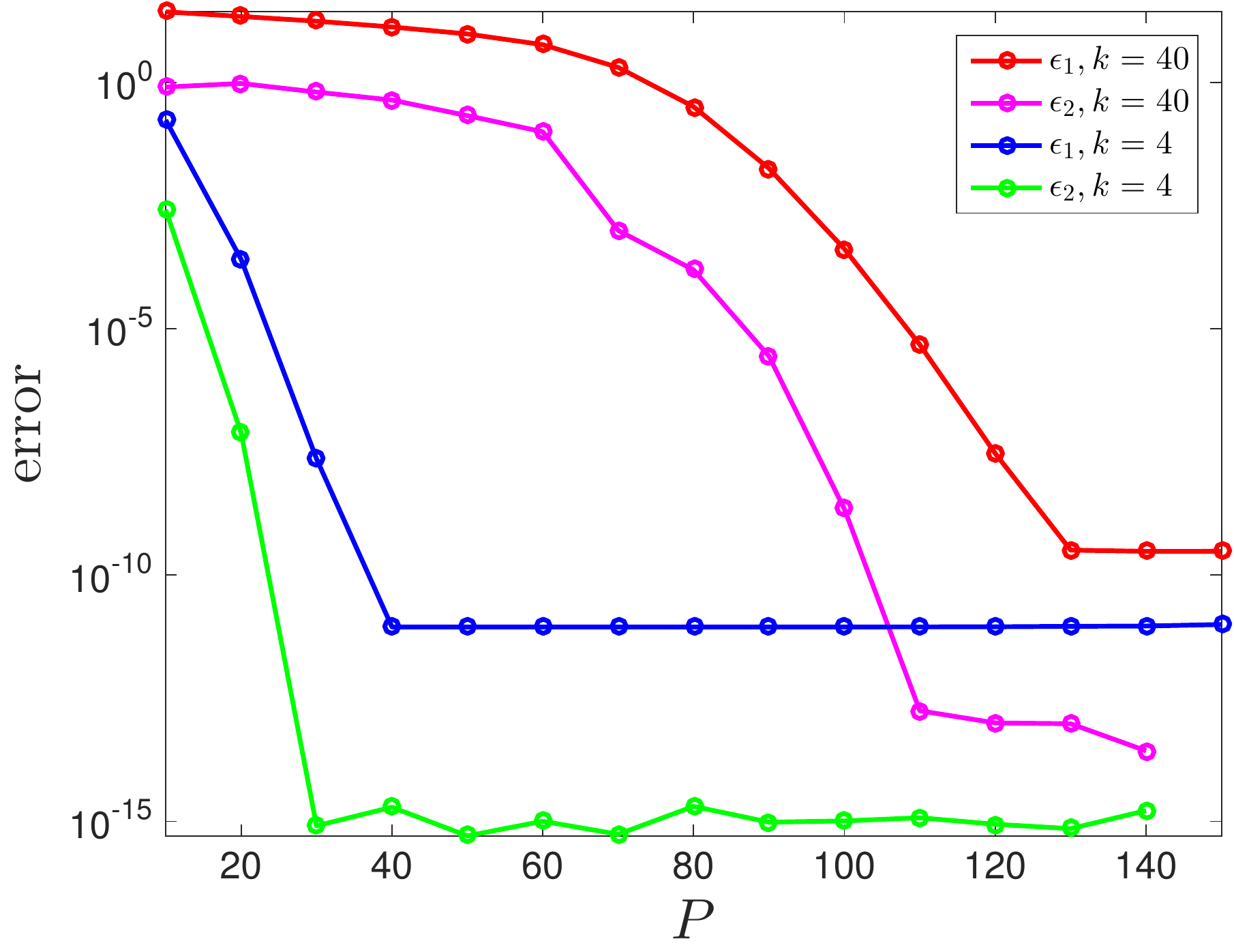}}
(e)\raisebox{-1.5in}{\ig{width=2in}{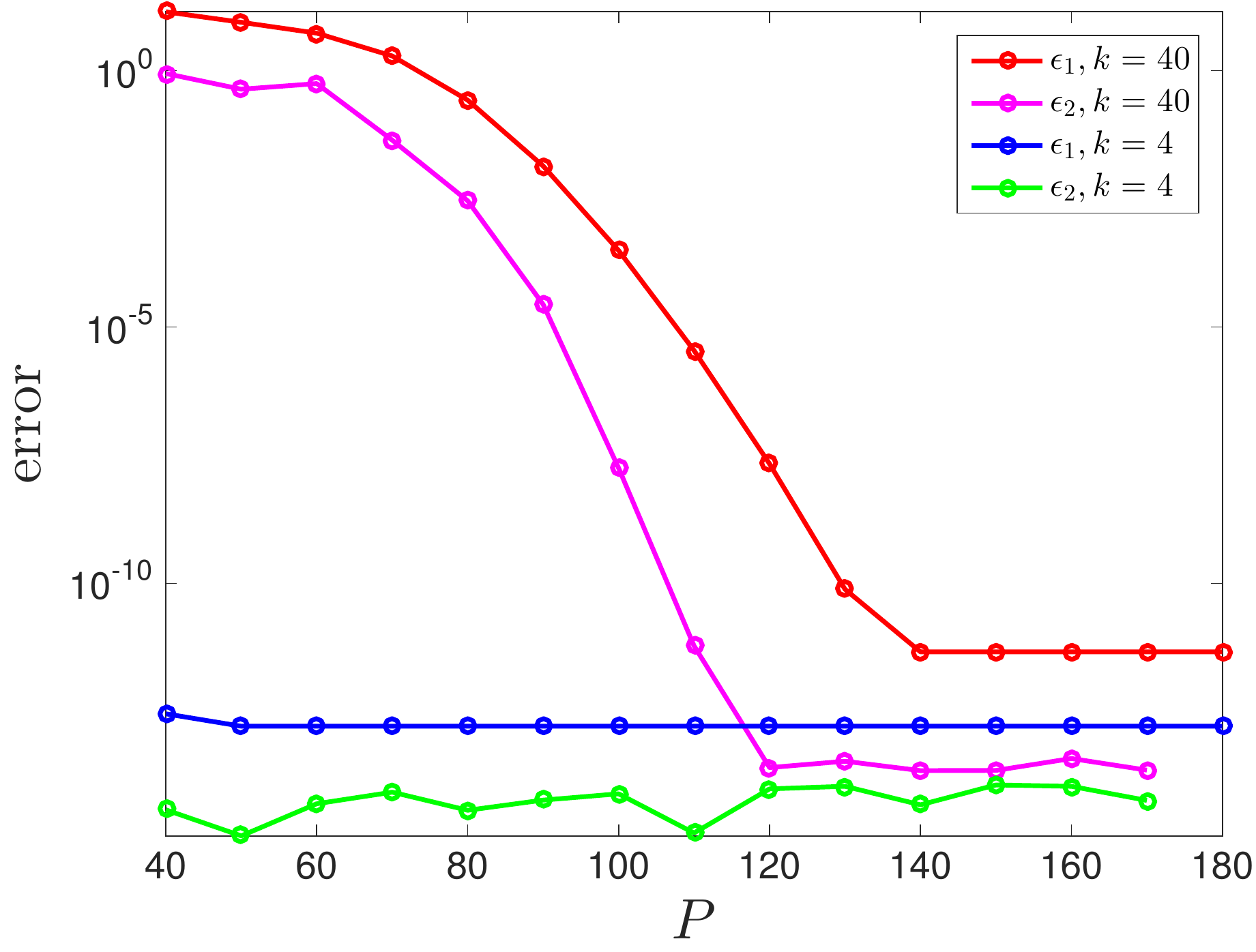}}
(h)\raisebox{-1.5in}{\ig{width=2in}{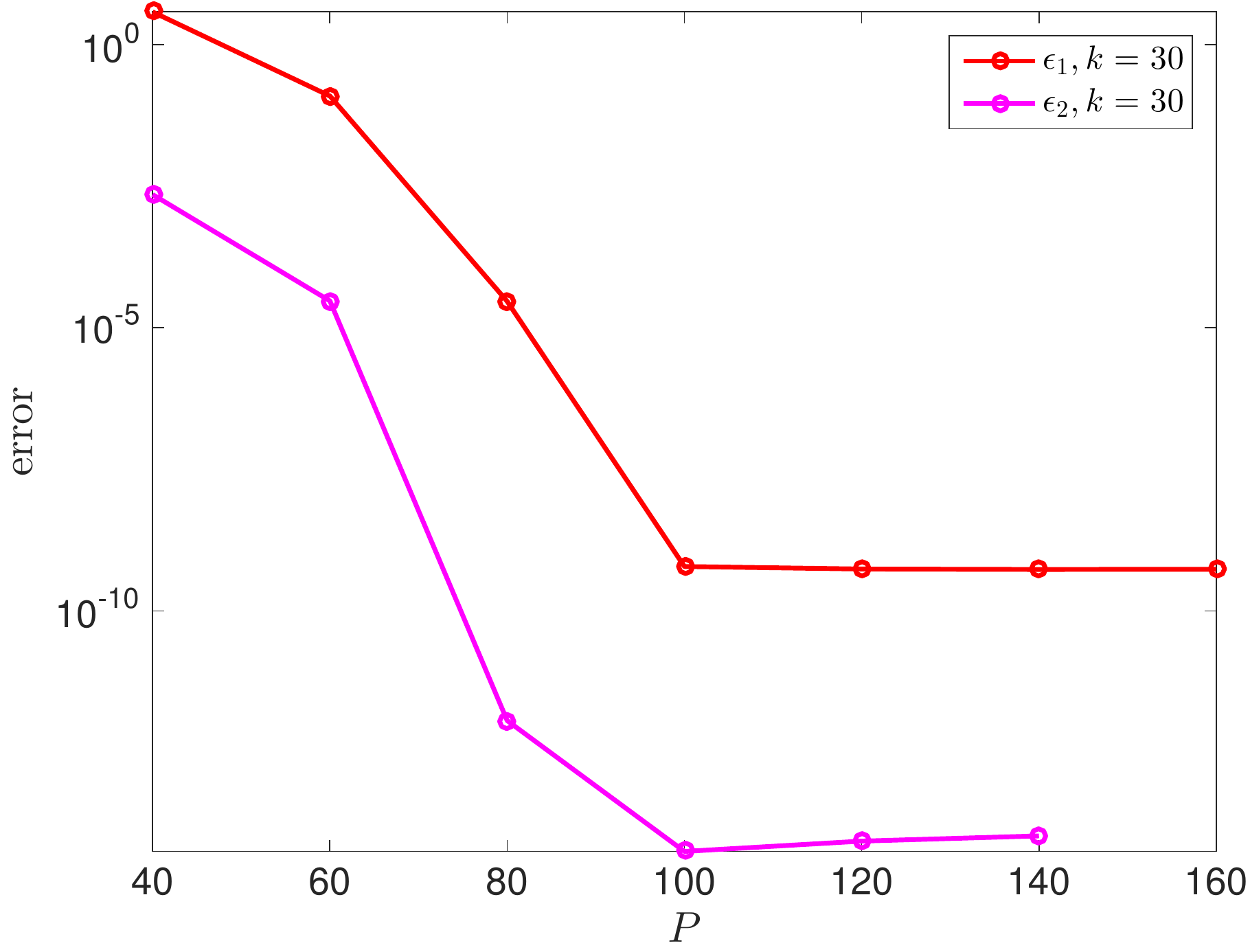}}
\\
(c)\raisebox{-1.5in}{\ig{width=2in}{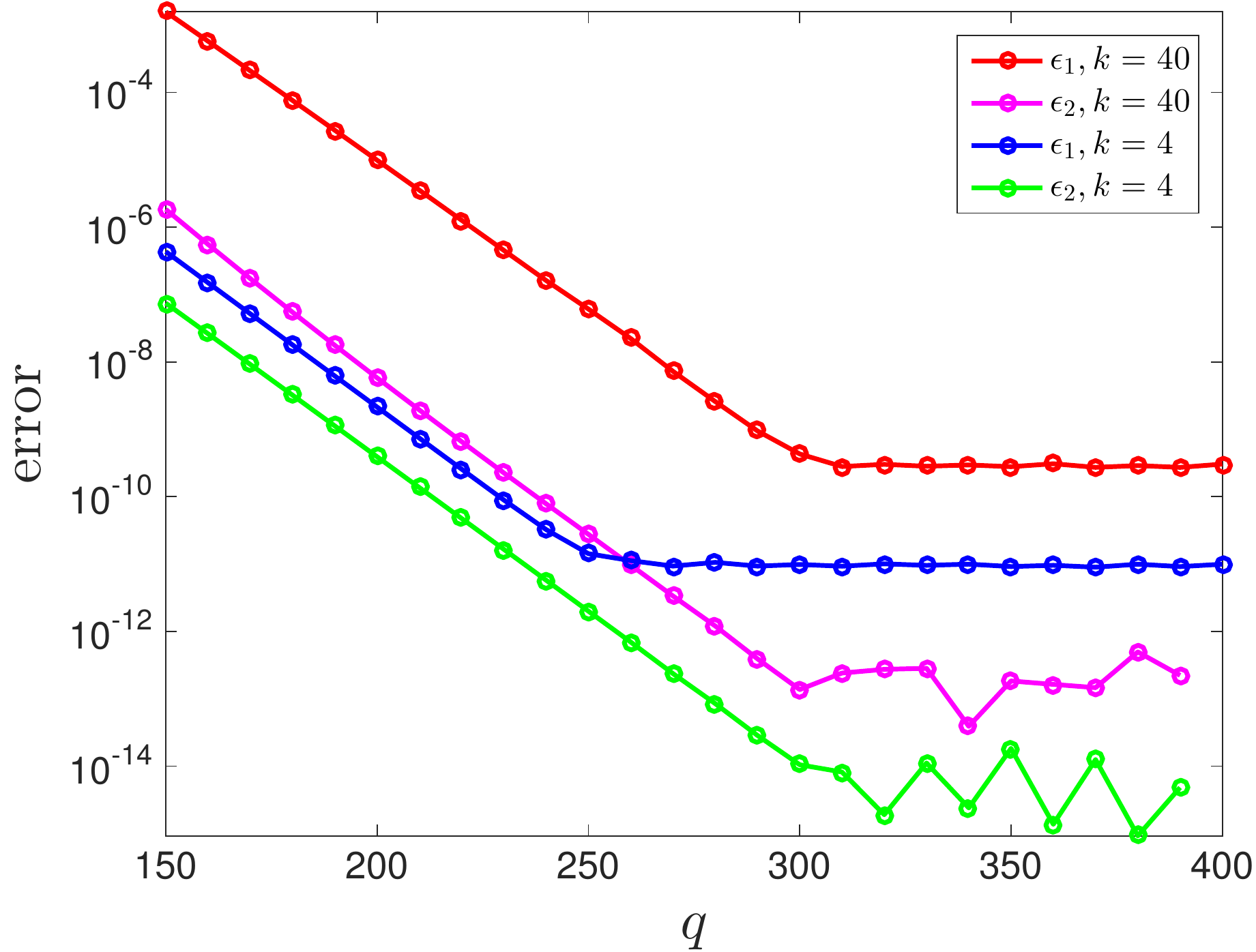}}
(f)\raisebox{-1.5in}{\ig{width=2in}{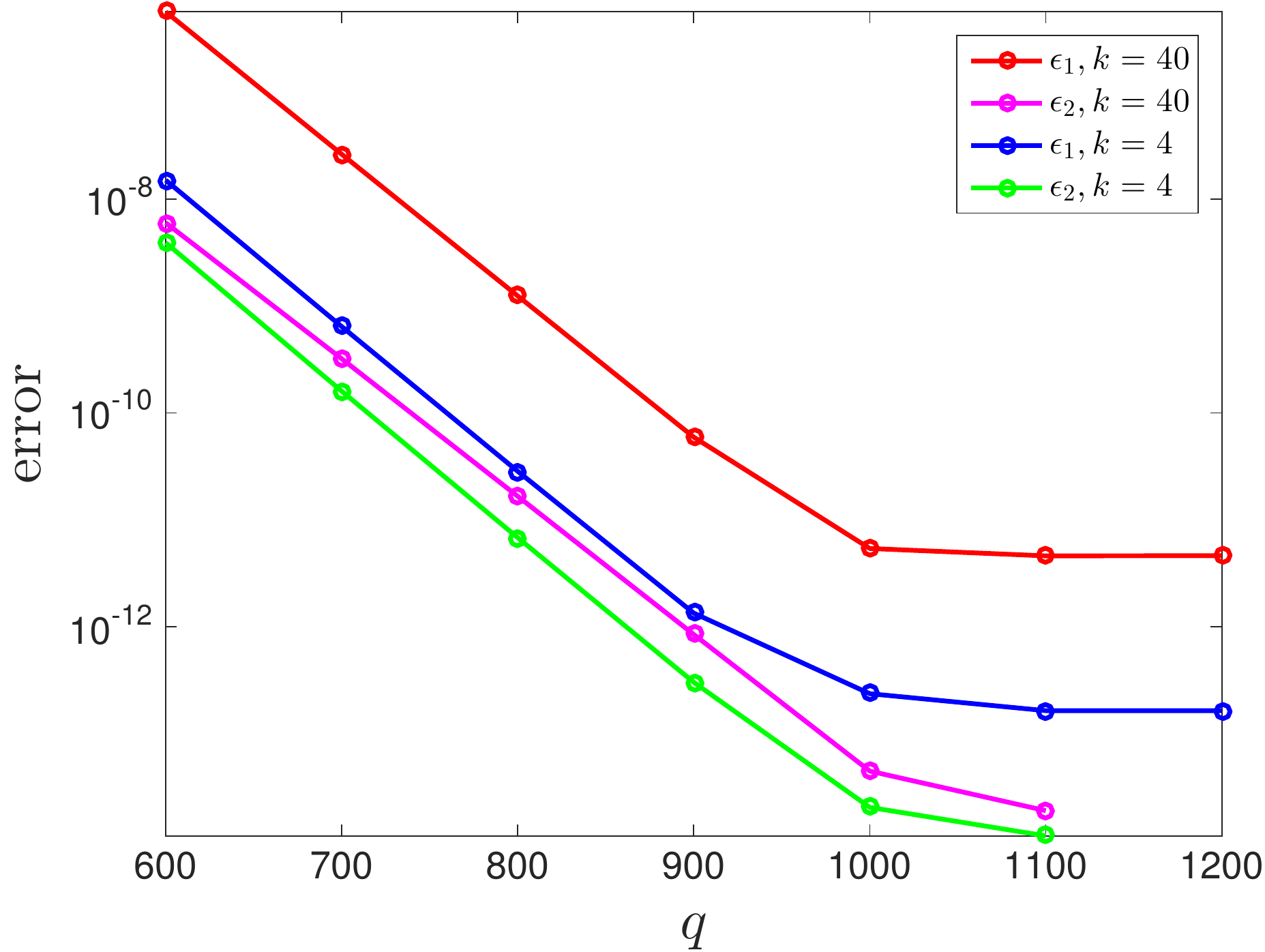}}
(i)\raisebox{-1.5in}{\ig{width=2in}{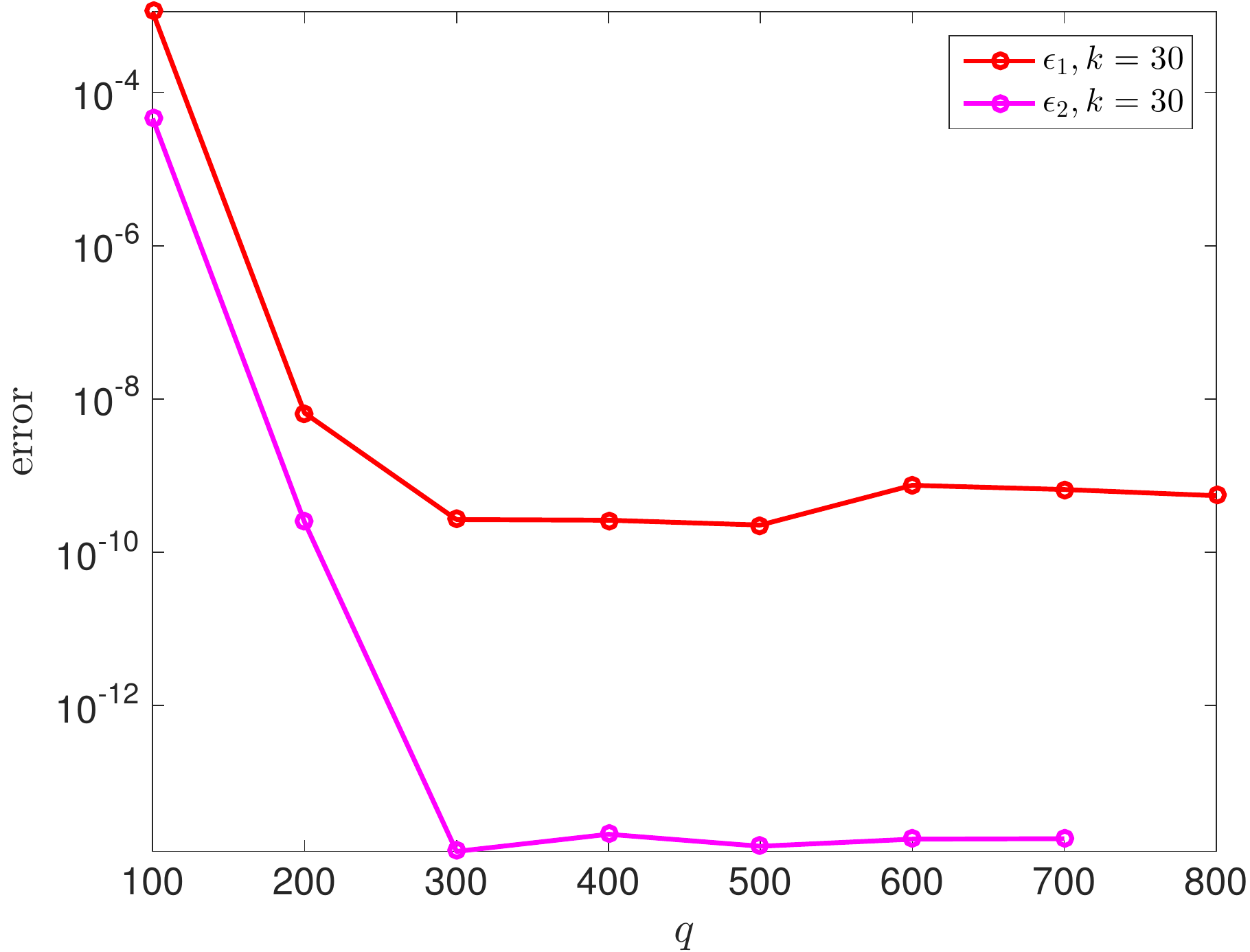}}
\ca{%
Error convergence for the 3D axisymmetric MFS solution of the Helmholtz
Neumann scattering BVP for low and high frequencies (wavenumber $k$
is shown in legends).
$\epsilon_1$ estimates the boundary condition error,
$\epsilon_2$ the solution error at a distant point.
Left column: ``smooth'' shape from \fref{f:shapes}(a),
with $\tau=0.1$.
(a) varying $N$, (b) varying $P$, (c) varying $q$.
The fixed (converges) parameter values are $N=260$, $P=150$, $q=400$.
Middle column: ``wiggly'' shape from \fref{f:shapes}(b),
with $\tau=0.03$.
Fixed values are $N=1000$, $P = 180$, $q=1200$.
Right column: cup shape from \fref{f:shapes}(c),
with $\tau=0.1$.
Fixed values are $N=500$, $P = 160$, $q=800$.
In all cases $M \approx 1.2 N$.
}{f:borconv}
\efi

\subsection{Isolated obstacle Neumann scattering convergence tests}
\label{s:borconv}


Thus far we have made three approximations, each of which involves a
parameter:
1) the MFS approximation involving $N$ source points,
2) the Fourier series truncation to $P$ terms,
and 3) the quadrature evaluation of the ring kernel via $q$ periodic
trapezoid nodes.
We now show convergence results for these parameters
in the context of solving the one-obstacle scattering problem
\eqref{helm}, \eqref{neu} and \eqref{somm}.
The first row of \fref{f:borconv} shows the convergence with respect
to $N$, the second row with respect to $P$, and the last row
with respect to $q$.
In each case the parameters not under convergence study are fixed
at their converged values.
Two types of error are shown:
\bi
\item $\epsilon_1$: 
Absolute $L^2$ error in the boundary
condition \eqref{neu}, estimated at a grid of $128\times 128$
points on $\pO$ very few of which coincide with collocation points
$\xx_i$,
and
\item $\epsilon_2$: 
relative error in the value of the scattered
potential $u$ at a distant point
near to $(10,10,10)$,
compared to its converged value.
\ei 
\fref{f:borconv} is consistent with exponential
convergence in all three parameters,
with the values where convergence starts being larger
for large wavenumber $k$.
The convergence of the two error measures is similar,
usually differing by a fixed constant,
and saturating at different values.
Even in the case of a resonant object at high frequency (the
plots in the right column), 9 digits are reached in the boundary condition error, and 12 digits in the solution $u$ at a distant point.

\subsection{The transmission case}
\label{s:trans}

Now we briefly describe how the above axisymmetric MFS scheme
is adjusted for the transmission case
\eqref{helmi}--\eqref{eq:neup1.4}.
As before, the scattered wave $u$ is represented by
the ring kernel MFS source sum \eqref{ubormfs}
with source locations $(\rho'_j,z'_j)$ inside the obstacle.
In addition the scattered wave $u^-$ inside the obstacle is
represented by a similar sum
$$
u^- \;\approx\;
\sum_{j=1}^N \snp c^-_{nj} \Phi_{nj}^-,
\qquad
\mbox{ where }
\quad
\Phi^-_{nj}(\xx) \; := \;
\mintc G_{\km}(\xx,\yy^-_j(\varphi)) e^{-in \varphi} d\varphi~,
\qquad \yy^-_j(\varphi) := (\rho''_j,\varphi,z''_j)~,
$$
with sources $(\rho''_j,z''_j)$ lying outside the obstacle.
The set-up is as in \fref{f:geom}(c) (which shows a 2D sketch),
an example being the green points in \fref{f:shapes}(b).
These exterior source locations are chosen by negating $\tau$ in the
source point location algorithms in \sref{s:loc}.
Imposing the value and normal derivative matching conditions
\eqref{eq:neup1.3} and \eqref{eq:neup1.4} gives the linear system
\be
\mt{A}{-A^-}{A'}{-A'^-}
\vt{\eta}{\eta^-}
= \vt{\hat{f}}{\hat{f}'}
\label{trans}
\ee
where each of the four blocks is block diagonal, with rectangular
diagonal blocks
$A_n$, $A^-_n$, $A'_n$, $A'^-_n$ respectively, $-P/2<n\le P/2$.
The new blocks $A^-_n$ and $A'^-_n$ are identical to \eqref{ringkernel}
and \eqref{ringkernelderiv} respectively, but with $k$ replaced by $\km$,
$\rho'_j$ by $\rho''_j$, and $z'_j$ by $z''_j$.
The RHS data blocks $\hat{f}_n$ and $\hat{f}_n'$ are constructed
as before via \eqref{fhatquad} using
$f = -\ui$ and $f' = -\frac{\partial\ui}{\partial n}$
respectively.
Abusing notation somewhat, we will summarize both \eqref{linsysb}
and \eqref{trans} by
the linear system
\be
A_0 \eta = \hat{f}~.
\label{A0}
\ee
where $A_0$ is of size $MP$-by-$NP$ in the Neumann case,
and size $2MP$-by-$2NP$ in the transmission case.

\section{Periodizing scheme}
\label{s:per}

We choose a cuboid unit cell $\UU$
of sizes $e_x$ and $e_y$ in the $x$ and $y$ axes
respectively, and truncated in the vertical direction to 
$z\in[-z_0,z_0]$ such as to contain $\Omega$.
The rectangular walls enclosing $\UU$
are left $L$, right $R=L+(e_x,0,0)$, back $B$, front $F = B+(0,e_y,0)$,
and downwards $D$ at $z=-z_0$ and top $T$ at $z=z_0$;
see \fref{f:bor}(b).
All normals point in the positive coordinate directions.
We use the abbreviation $u_L$ to mean $u$ restricted to $L$,
and $u_{nL}$ to mean $\partial u/\partial n$ restricted to $L$.
We now reformulate the periodic BVP on $\UU$ alone, as in \cite{bonnetBDS,mlqp}.
Taking for simplicity the Neumann case, $u$ satisfies \eqref{helm}
in $\UU$, \eqref{neu},
and we match Cauchy data on the four side walls giving
\bea
u_R - \al u_L &=& 0
\label{qp1}
\\
u_{nR} - \al u_{nL} &=& 0\\
u_F - \bt u_B &=& 0 \\
u_{nF} - \bt u_{nB} &=& 0~,
\label{qp4}
\eea
which (because of the unique continuation property of an elliptic PDE)
is equivalent to quasiperiodicity \eqref{per},
and finally match Cauchy data to
Rayleigh--Bloch expansions \eqref{rb1}--\eqref{rb2}
on the top and bottom walls,
\bea
u(x,y,z_0) &=& \sum _{m,n\in \mathbf Z} a_{mn} \exp{i[\kx x + \ky y]}~,
\qquad (x,y,z_0)\in T \label{T} \\
u_z(x,y,z_0) &=& \sum _{m,n\in \mathbf Z} i\kz a_{mn} \exp{i[\kx x + \ky y]}~,
\qquad (x,y,z_0)\in T \label{nT} \\
u(x,y,-z_0) &=& \sum _{m,n\in \mathbf Z} b_{mn} \exp{i[\kx x + \ky y]}~, \qquad (x,y,-z_0)\in D  \label{D}\\
u_z(x,y,-z_0) &=& -\sum _{m,n\in \mathbf Z} i\kz b_{mn} \exp{i[\kx x + \ky y]}~, \qquad (x,y,-z_0)\in D~.
\label{nD}
\eea
The solution $u$ in $\UU$ is represented as
\be
u(\xx) \approx \sum_{j=1}^N \sum_{n=-P/2+1}^{P/2} c_{nj} \Phi^\tbox{near}_{nj}(\xx)
+ \sum_{l=0}^\pdeg\sum_{m=-l}^l d_{lm} j_l(kr) Y_{lm}(\theta,\phi)
\label{urep}
\ee
where the sum of the ring kernel over the nearest neighbors in the lattice is
\be
\Phi^\tbox{near}_{nj}(\xx) := \sum_{|m|,|n| \le 1}\al^m \bt^n
\mintc
G_k(\xx,\yy_j(\varphi) + m\eo + n\et) e^{-in\varphi} d\varphi
~.
\label{Phinear}
\ee
The unknowns are $\eta := \{c_{nj}\}_{j=1,\dots,N, -P/2<n\le P/2}$,
$\mbf{d} := \{d_{lm}\}_{l=0,\dots,\pdeg, |m|\le l}$,
and we restrict the Rayleigh--Bloch expansions
\eqref{T}--\eqref{nD}
to the $xy$-plane wavevectors of magnitude at most $\pi N_0$,
where $N_0$ is a convergence parameter,
so $\mbf{a} := \{a_{mn}\}_{(\kx)^2+(\ky)^2 \le \pi^2N_0^2}$
and $\mbf{b} := \{b_{mn}\}_{(\kx)^2+(\ky)^2 \le \pi^2N_0^2}$. 

\begin{rmk}[Choice of $z_0$]
If the vertical extent of $\Omega$ is not larger than the period,
then choosing $\UU$ roughly cubical
is efficient. Generally, $z_0$ must be at least
some distance (say, $1/4$ period)
above the extent of $\Omega$ in order that the Rayleigh--Bloch
expansions converge rapidly;
on the other hand if $z_0$ is too large, the
spherical harmonic convergence rate on the faces is reduced.
For high aspect ratio obstacles (not tested in this work),
either more local terms are needed in \eqref{Phinear},
or an elongated proxy surface could be used (in the style
of App.~A; also see \cite{BG_BIT}).
\end{rmk}

\subsection{Full linear system}
\label{s:sys}

We now build the full linear system that the stacked
column vector of all unknowns $[\eta;\mbf{d};\mbf{a};\mbf{b}]$ must satisfy,
by enforcing the boundary conditions, but also the quasiperiodicity
and upward and downward radiation conditions.

The first block row arises from enforcing \eqref{neu} on the boundary
nodes on $\pO$ as in \sref{s:mfs}, giving
\be
A\eta + \tilde{B} \mbf{d} = \hat{f}
\label{row1}
\ee
where
\be
A = A_0 + \Aelse,
\label{A}
\ee
$A_0$ being the direct self-interaction of the obstacle
as in \eqref{A0} (i.e.\ $A_0$ is zero apart from diagonal blocks $A^-_n$).
Rather than writing a long formula for the matrix elements
of $\Aelse$, it is more useful to describe its action:
$\Aelse\eta$ is the set of Fourier series coefficients of
the normal-derivatives of $u$ on the target rings
$\{\rho_m,z_m\}_{m=1}^M$ due to the eight phased ring sources
from the first term in \eqref{urep}, omitting the central
copy $m=n=0$.
In practice we may compute $\Aelse\eta$ efficiently as follows:
the quadrature \eqref{ptr} is equivalent to replacing
each ring kernel by $q$ point sources with strengths
given by the FFT of the coefficients $c_{nj}$ in the $n$ direction.
The FMM evaluates the potential on the  $8qN$ sources at a set of
$qM$ trapezoidal-node targets on rings on $\pO$,
and the FFT is finally used to convert back to Fourier coefficients
as in \eqref{fhatquad}.
A quadrature parameter $q=P$ is sufficient here, since interactions
in $\Aelse$ are distant.
The cost of applying $\Aelse$ (assuming $M=\bigO(N)$) is thus
$\bigO(NP\log P)$, although it is typically dominated
by the $\bigO(NP)$ of the FMM.

The matrix $\tilde{B}$ in \eqref{row1} has elements that
can approximated by the periodic trapezoid rule,
$$
\tilde{B}_{ni,lm} = \mintc j_l(kr_i)Y_{lm}(\theta_i,\phi) e^{-in\phi}d\phi
\approx \frac{1}{q} \sum_{s=1}^q
j_l(kr_i)Y_{lm}(\theta_i,2\pi s/q) e^{-2\pi in s/q}
~,
$$
where $r_i = \sqrt{\rho_i^2+z_i^2}$ and $\theta_i = \tan^{-1} z_i/\rho_i$
are the spherical coordinates of the $i$th boundary point on $\gamma$.
Note that if the axis of the obstacle is aligned with the $z$-axis of
the spherical harmonic expansion, simplifications apply
making $\tilde{B}$ sparse;
for more generality we leave it in the above form.
We fill the matrix $\tilde{B}$ once and for all at a given $k$, at
a cost $\bigO(NP\pdeg^2)$.

The remaining block rows of the full
linear system result from substituting 
\eqref{urep} into \eqref{qp1}--\eqref{nD},
or, more specifically, \eqref{qp1}--\eqref{nD} evaluated at collocation
nodes lying on the faces.
For these nodes we use the nodes from a $M_1$-by-$M_1$ product Gaussian quadrature on each rectangular face,
with $M_1$ chosen large enough that further changes have no effect on
the solution.
We pick $M_1$ to be $M_1 \approx 4 \frac{k}{\pi} $ so that we can guarantee
$4$ points per wavelength in each direction of the rectangular faces.

The resulting full system has the form
\be
\begin {bmatrix}
  A& \vline & \tilde{B} & 0 & 0  \\
	\hline 
	C_{L,R}  &\vline& S_{L,R} & 0 & 0 \\
	C_{nL,nR} & \vline& S_{nL,nR} & 0 & 0 \\
	C_{B,F}  &\vline &S_{B,F} & 0 & 0 \\
	C_{nB,nF}& \vline & S_{nB,nF} & 0 & 0 \\
	C_{T} &\vline & S_{T}  & -W_T & 0 \\
	C_{nT} & \vline & S_{nT}  & -W_{nT} & 0 \\
	C_{D}  &\vline  & S_{D}  & 0 & -W_D \\
	C_{nD} &\vline &  S_{nD}  & 0 & -W_{nD} \\
\end {bmatrix} 
 \begin {bmatrix} \eta \\ \hline \mathbf d \\ \mathbf a \\ \mathbf b \end{bmatrix}
=   \begin {bmatrix}
        \hat{f}\\
        0\\
        0\\
        0\\
        0\\
        0\\
        0\\
        0\\
        0
\end {bmatrix}
~,
\qquad\mbox{ summarized as }
\quad
\mt{A}{B}{C}{Q}
\vt{\eta}{\xi}
=\vt{\hat{f}}{0}~,
\label{full}
\ee
where $\xi := [\mbf{d};\mbf{a};\mbf{b}]$ groups the periodizing unknowns,
and where the division of the matrix blocks defining the $2\times2$
block notation is shown by lines.

The recipe for filling the above blocks is implicit in the above description,
and rather than give their full formulae (see \cite{mlqp} for full formulae
in a related 2D problem), we explain their meaning.
The $A$ block is already given in \eqref{A}.
The $B$ block is merely $\tilde{B}$ padded with zeros to its right.
The $C$ block describes the effect of the
ring kernel coefficients $c_{nj}$ in $\eta$
on the {\em discrepancies}, namely the left-hand sides of
\eqref{qp1}--\eqref{qp4}, and on the Cauchy data on $T$ and $D$.
Significant cancellation occurs in the upper four $C$ blocks,
identical to that in \cite{BG_jcp}.
Consider the $3\times3$ grid of nearest neighbors in phased image sums such as
\eqref{Phinear}, in the block $C_{L,R}$:
the effect of the left-most six on the $L$ wall cancels the effect of the right-most six on the $R$ wall, leaving only 6 of the original 18 terms.
The remaining terms correspond only to {\em distant interactions}:
the effect of the right-most three on $L$ and the left-most three on $R$.
Identical cancellations occur in the next three blocks of $C$.
As discussed in \cite{BG_jcp}, this also allows the scheme
to work well even if $\Omega$ is not confined within $\UU$
and wall-obstacle intersections occur.
The $C$ matrix is filled densely once and for all at each $k$,
at a cost $\bigO(M_1^2NP)$.

$Q$ has $8M_1^2$ rows and $N_Q := (\pdeg+1)^2 + \bigO(N_0^2)$ columns;
note that this is independent of $\NN$, the number of obstacle unknowns.
However, both dimensions of $Q$ grow with wavenumber as $\bigO(k^2)$.
We fill its non-zero blocks densely by evaluation of spherical harmonics
(for the $S$ blocks) and plane-wave expansions (for the $W$ blocks).
The $S$ blocks give the effect of the spherical harmonic basis on
the discrepancies and Cauchy data on $T$ and $D$, while
the $W$ blocks give the effect of the Rayleigh--Bloch expansions on
and Cauchy data on $T$ and $D$. The negative signs in the latter
account for the fact that the jumps in Cauchy data should vanish.

The transmission case is similar to the above, with
$A$ twice the size in each dimension (with $A_0$ as given in \eqref{trans}), and $[\hat{f};\hat{f}']$ replacing
$\hat{f}$ in the RHS.
Note that the representation of the interior potential $u^-$ only involves $A_0$, i.e.\ a non-periodized ring kernel.

In the next section we will demonstrate
convergence with respect to the parameters $\pdeg$ and $N_0$.

\begin{rmk}
Proxy source points have recently been proposed in a similar periodizing
scheme for the 3D Laplace equation \cite{gumerov}.
However, in App.~A we show that for Helmholtz problems
spherical harmonics are a much more efficient choice than
proxy points.
\end{rmk}

\subsection{Rapid solution of the linear system}
\label{s:iter}

Since we expect $\NN = NP$,
the number of columns of $A$, to be $10^4$ or greater,
the linear system \eqref{full} is too large to solve directly
(in contrast to related 2D work \cite{BG_jcp,BG_BIT}).
Hence we wish to apply an iterative method for the obstacle
ring source unknowns $\eta$.
We eliminate the smaller number of unknowns in $\xi$,
taking a Schur complement of \eqref{full}, to give
$$
(A - B Q^+ C) \eta = \hat{f}
$$
where $Q^+$ is the Moore--Penrose pseudoinverse of $Q$.
This is a rectangular system with poor conditioning similar to that
of the axisymmetric MFS system $A_0\eta = \hat{f}$ in \eqref{A0}.
Note that $A - B Q^+ C$ computes the one-obstacle interaction $A_0$
but with the quasiperiodic Green's function \eqref{GQP};
this explains why the scheme breaks down at Wood anomalies.
Using \eqref{A} and
right-preconditioning by $A_0^+$, we would get the square system
$$
(A_0 A_0^+ + \Aelse A_0^+ - B Q^+ C A_0^+) \tilde{\eta} = \hat{f}
~,
$$
from which we can recover the solution $\eta = A_0^+ \tilde{\eta}$.
However, since $M > N$ and $A_0$ is often
rank-deficient in MFS applications, $A_0$ has less than full range.
$A_0 A_0^+$ is the orthogonal projector onto
the range of $A_0$.
To create a well-conditioned system which can be solved iteratively,
we replace $A_0 A_0^+$ by the identity, since this has no effect
on the resulting desired $\eta$, solving
\be
(I + \Aelse A_0^+ - B Q^+ C A_0^+) \tilde{\eta} = \hat{f}
\label{iter}
\ee
via a non-symmetric Krylov method such as GMRES.
In effect we are working in a space of surface unknowns $\tilde{\eta}$
rather than ring charge source unknowns $\eta$.
Once $\eta$ is known, $\xi = [\mbf{d};\mbf{a};\mbf{b}]$ is reconstructed
via
$$
\xi = -Q^+ C \eta~,
$$
and the solution potential $u$ can then be evaluated anywhere in $\UU$
using $\eta$ and $\mbf{d}$.
The solution in $|z|>z_0$ can be evaluated via \eqref{rb1}
using $\mbf{a}$ or \eqref{rb2} using $\mbf{b}$.

A practical word is needed about handling the pseudoinverses.
Because of axisymmetry
$A_0$ is block diagonal, so each block $A_n$ can be inverted independently
by taking its SVD to give $A_n = U_n \Sigma_n V_n^\ast$.
Care must be taken to apply each $A_n^+$ correctly, otherwise a large loss of
accuracy results: to compute $A_n^+x$ for some $x\in \mathbb{C}^M$,
one uses $V_n \Sigma_n^+ (U_n^\ast x)$.
The truncation parameter used in $\Sigma_n^+$ is taken as $10^{-10}$.
Since there are $P$ blocks, 
the precomputation required for $A_0^+$ is $\bigO(PN^3)$, then each
application takes $\bigO(PN^2)$ with a very small constant.
Note that filling $A_n^+$ then using it for matrix-vector multiplication
would be dangerous since it is not backward stable; $A_n^+$ may
have exponentially large elements which induce catastrophic round-off error
(see comments in \cite[Sec.~5]{junlai} and \cite[Sec.~3.2]{mlqp}).

Similar care is needed for $Q$:
a dense SVD gives $Q = U\Sigma V^\ast$.
Assuming that $y = C A_0^+ \eta$ has already been computed as above,
$Q^+ y$ computed as $V \Sigma^+ (U^\ast y)$.
The SVD of $Q$ takes $\bigO(M_1^2N_Q^2)$ time.
Fixing the obstacle, the numerical parameters
$N$, $P$, $\pdeg$, $N_0$, and $M_1$ all grow as $\bigO(k)$.
Thus the time for the SVD of $Q$ grows rapidly with
wavenumber as $\bigO(k^6)$.
This limits the largest $k$ in practice on a standard
workstation to one in which the unit
cell is around a dozen wavelengths in period.

\begin{rmk}  
There are other fast solution methods for the linear system \eqref{full}.
For instance, one could instead eliminate $\eta$
if a fast application of $A^{-1}$ were available
(this is done in the analogous 2D periodic scattering
problem in \cite{Gillman_Barnett_jcp}).
\end{rmk}

\bfi 
(a)\raisebox{-1.8in}{\ig{width=2.5in}{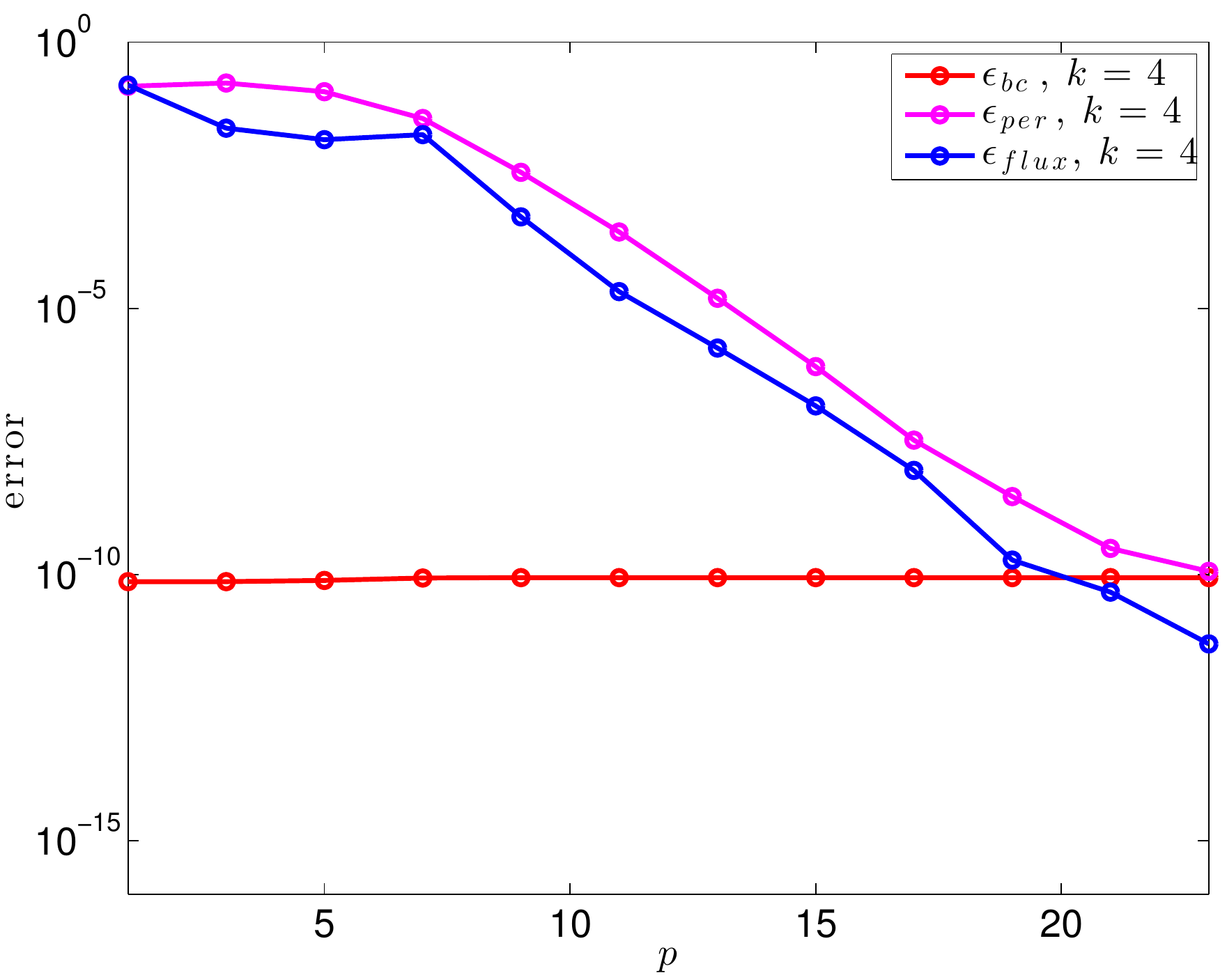}}
\qquad\qquad
(b)
\raisebox{-1.8in}{\ig{width=2.5in}{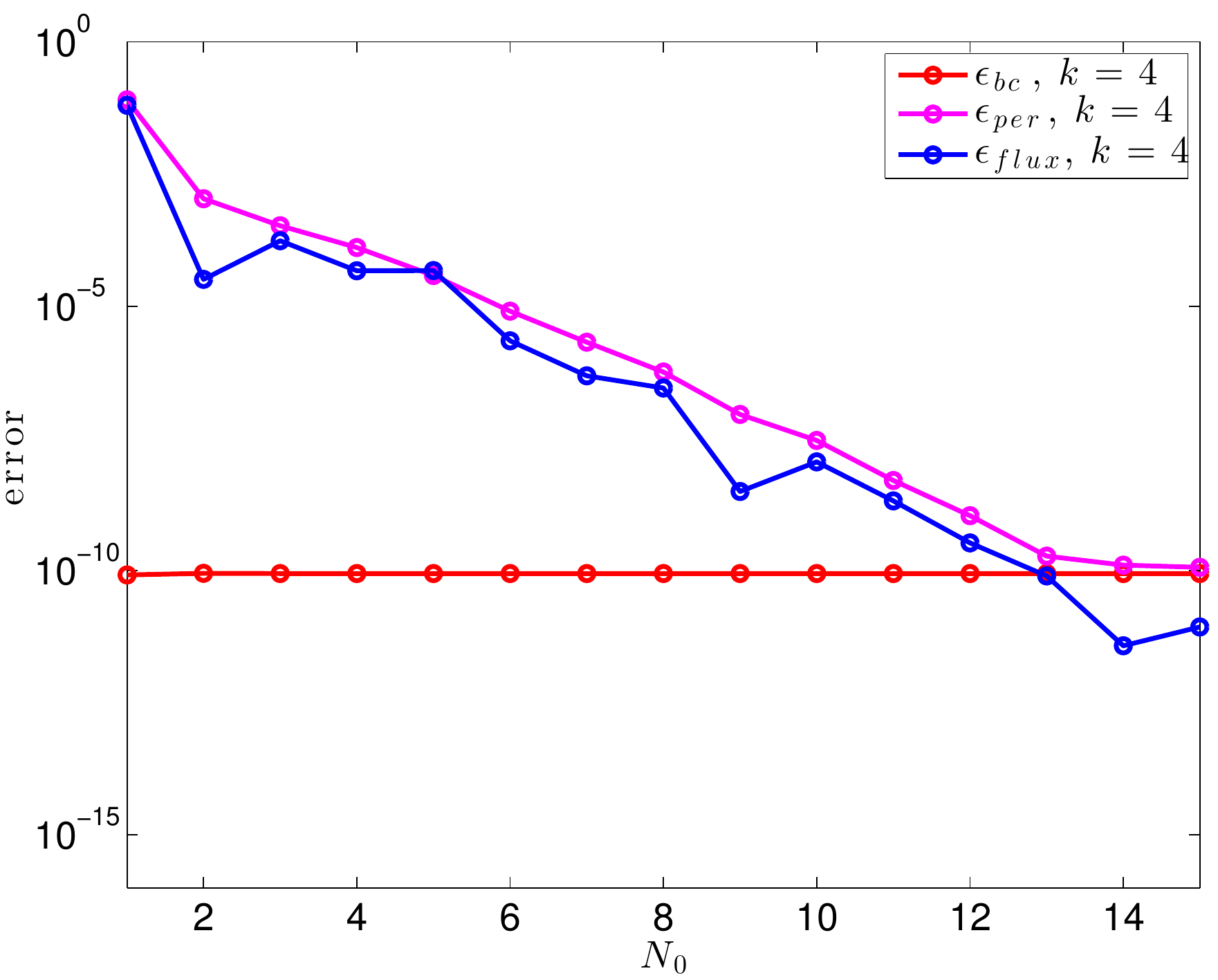}}
\ca{Convergence of errors for the periodizing scheme
for the Neumann scattering from a grating of ``smooth'' objects
as in \fref{f:shapes}(a), at low frequency.
Three types of error are shown: $L_2$ norm for the boundary condition, $L_2$ norm for the periodicity matching
flux conservation, and 
(a) error vs $\pdeg$, fixing $N_0 = 15$; (b) error vs $N_0$ when $\pdeg = 24$.%
}{f:perconv}
\efi

\begin{table}
\begin{tabular}{l*{13}{c}r}
BVP/shape & $k$ & $N$ & $P$ &  $p$  & $N_0$& $M_1$&fill&factor& solve& \# iters& $\ebc$ & $\eper$&$\eflux$&RAM\\ \hline\hline
Neum/smooth & 4 & 150 & 60 &  24 &13  & 24& 22 s &4.8 s & 34 s  & 12  & 9e-11 &  2e-10 &  8e-11  & 2GB\\
Trans/smooth & 4,6 & 150 & 60  & 24& 13& 24 &26 s & 6.6 s& 36 s & 12  &1e-11 & 1e-10&   2e-11  & 2.5GB \\
\end{tabular}
\caption{Low-frequency periodic scattering example, for the
``smooth'' shape of \fref{f:shapes}(a) in a unit cell $1.3 \lambda$ in period.
The columns show wavenumbers ($k$, and $\km$ when appropriate),
numerical parameters, timings, three error metrics, and total RAM usage.
The two rows are for Neumann boundary condition, and transmission condition.
$N$ is the number of MFS source points,
$P$ is the number of Fourier modes, $p$ the maximum degree of the
auxiliary spherical harmonic basis, $N_0$ the maximum order of the
Rayleigh--Bloch expansion.
The column ``fill'' reports the time to fill the matrices, i.e.\ $A_0, B, C$ and $Q$;  ``factor'' reports the factorization time, i.e.\ doing the SVD on the matrix blocks of $A_0$ and on $Q$; while ``solve'' reports the iterative
solver time.
\label{t:reslo}
}
\end{table}

\begin{table}
\hspace{-2ex}
\begin{tabular}{l*{13}{c}r}
BVP/shape & $k$ & $N$ & $P$ &  $p$  & $N_0$ & $M_1$&fill&factor& solve & \# iters& $\ebc$ & $\eper$&$\eflux$&RAM\\ \hline\hline
Neum/cup & 30 & 240 & 150 &  70 &21 &38 & 167 s & 297 s & 346 s  & 57  & 2e-10 &  5e-11 &  4e-11  & 17GB\\
Neum/cup & 40 & 360 & 180 &  86 &25 &45 &420 s  &871 s & 736 s &  65 &2e-10 &2e-10  &  4e-11  & 41GB\\
Trans/wiggly & 30,40 & 360 & 200  & 70& 21& 38& 440 s& 322 s& 756 s & 62  &7e-10 & 4e-11&   9e-11  & 58GB \\
Trans/wiggly & 40,60 & 400 & 200  & 86&25 &45 &700 s &934 s& 1090 s &  65 &1e-10 & 2e-10 & 1e-11    & 93GB \\
\end{tabular}
\caption{Higher-frequency periodic scattering examples ($10 \lambda$ and
$13 \lambda$ in period).
The first two rows use Neumann boundary conditions on the resonant cup shape,
and $\tau=0.03$;
the last two rows are for transmission conditions on the ``wiggly''
shape of \fref{f:shapes}(b) and $\tau=0.1$.
Other notation is as in Table~\ref{t:reslo}.
\label{t:reshi}
}
\end{table}

\begin{figure} 
\hspace{-5ex}\mbox{
(a)
\raisebox{-2.5in}{\ig{width=3.2in}{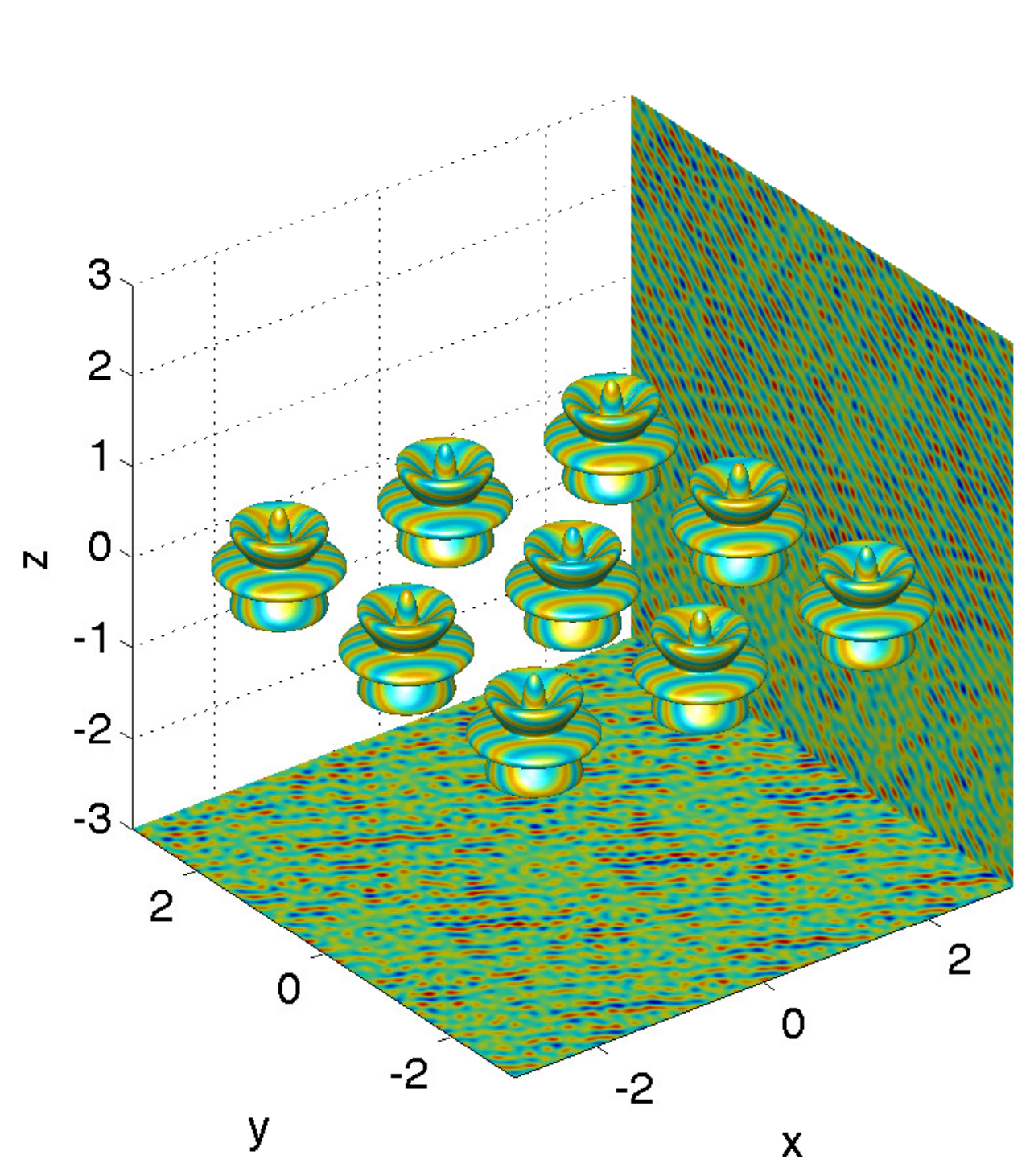}}
(b)
\raisebox{-2.5in}{\ig{width=3.2in}{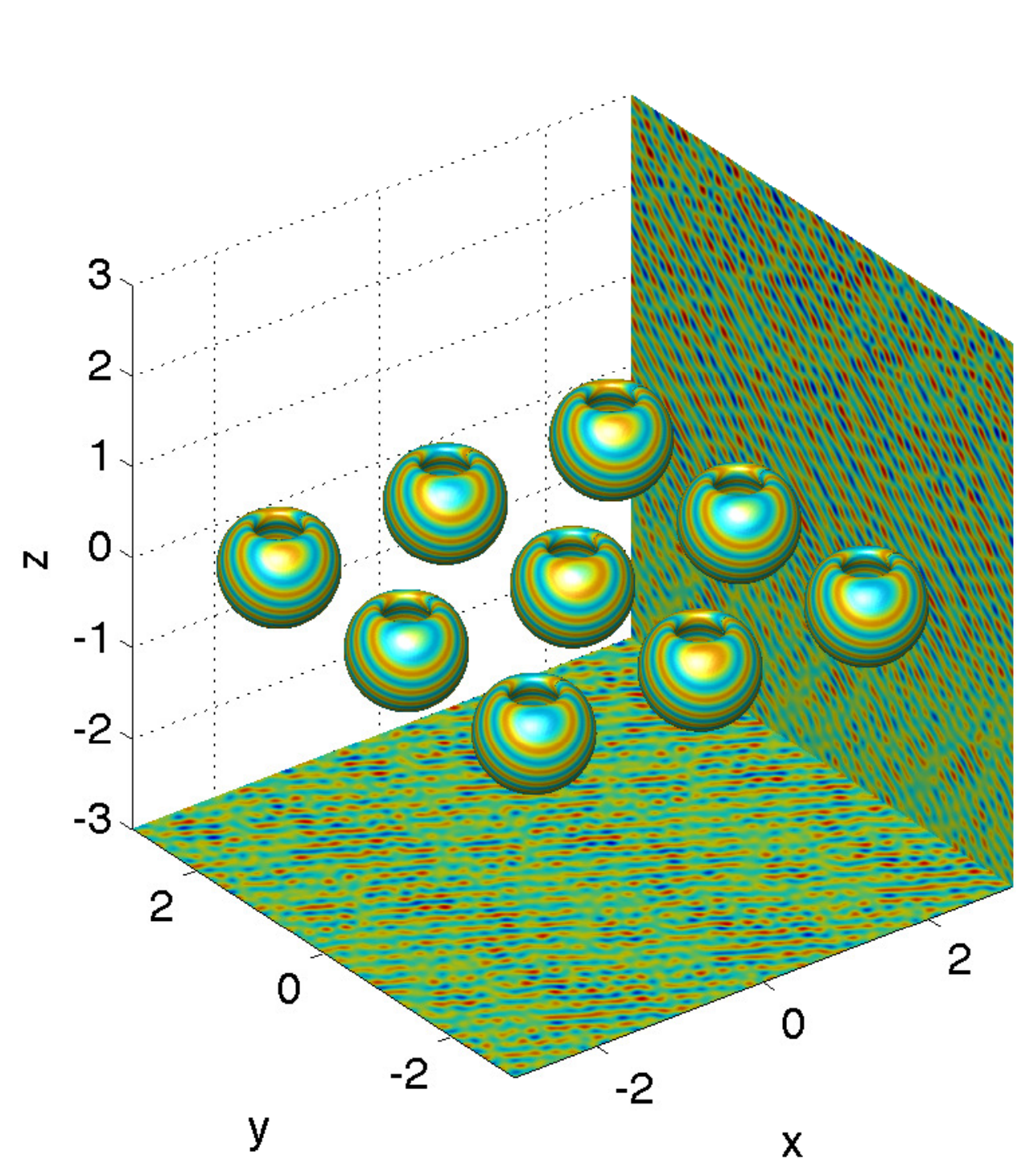}}
}
\caption{(a) High-frequency periodic transmission scattering solution with $k = 40,\km = 60$, for the ``wiggly'' shape of \fref{f:shapes}(b).
(b) High-frequency periodic Neumann scattering solution with $k=40$,
for the cup shape.
In both cases the full wave $\ut$ is shown on two slices,
the incident wave $\ui$ shown on the obstacle surfaces,
and the periodicity is $13 \lambda$.
The direction of the incident wave is $\theta = -\frac{\pi}{4}, \phi = \frac{\pi}{3}$, in spherical coordinates.
\label{f:plots}
}
\end{figure}

\section{Results} 
\label{s:res}

We first present some details of our implementation.
We use MATLAB R2013b on a desktop workstation
with two quad-core E5-2643 CPUs and 128 GB of RAM.
The tolerance for GMRES (MATLAB's implementation) is set to $10^{-12}$.
We apply $\Aelse$ via the FMM,
using MEX interfaces to
the Fortran implementation by Gimbutas--Greengard \cite{HFMM3D},
with the following modification:
we set the internal parameter {\tt maxlevel=3} in the routine
{\tt d3tstrcr},
which has the effect of limiting the depth of building the oct-tree
to 2.
Since all sources are well-separated from all targets in $\Aelse$,
this bypasses time spent propagating the source multipoles
up the tree via M2M, and local expansions L2L down the tree to the targets.
The result is a factor 1--3 speed-up over the vanilla FMM call
from this library.
We set {\tt iprec=3} which requests 9 digits of accuracy in the FMM.
Other spherical harmonic evaluations required for $B$, $C$ and $Q$ are
done using a MEX interface to the recurrence relation based fortran
libraries in \cite{HFMM3D}, available at\\
{\tt http://math.dartmouth.edu/$\sim$ahb/software/localexp3d.tgz}

The set of MATLAB codes we developed for the tests in this paper
will be available at\\
{\tt https://math.dartmouth.edu/$\sim$yliu/software/acper.tgz}

\begin{rmk}
For the $\NN$ we test in this work, the dense matrix blocks $B$ and $C$
fit in RAM, and thus we fill them once then apply then via standard BLAS2
matrix-vector multiplies.
If they cannot fit in RAM, then they can be applied on the fly using the FMM
with only a constant factor change in effort.
The (smaller) $Q$ matrix must be stored and inverted densely in our scheme.
\end{rmk}

Errors for the periodic scattering problem are measured in three
different ways:
\bi
\item $\ebc$: an estimate in the $L^2$ error in satisfying the boundary
condition on $\pO$ 
(defined as $\epsilon_1$ was in Sec.~\ref{s:mfs}),
\item $\eper$: an estimate of the $L^2$ error in satisfying the
periodic boundary conditions on the unit cell walls $L$, $R$, $F$ and $B$,
and
\item $\eflux$: the flux error giving the absolute value of the
difference between incoming and outgoing fluxes,
$$
\eflux : = \biggl|
\sum_{\kz>0} \kz \bigl(|a_{mn}|^2 + \bigl|b_{mn} + e^{-i\kappa_z^{(0,0)} z_0}\delta_{m0}\delta_{n0}\bigr|^2\bigr)
- \kappa_z^{(0,0)}
\biggr|
$$
Note that the incident wave corresponds to Bragg orders $m=n=0$, and the
phase shift is needed because the reference for $b_{mn}$ is at
$z=-z_0$.
For the Neumann or transmission BVPs with real-valued $k$ and $\km$
(non-absorbing materials), this error should be zero.
\ei

There are several numerical parameters, but they fall into two categories.
The one-body solution method is controlled by
$N$, $M$, $P$, $\tau$, and $q$ (convergence with respect to these
being shown in \sref{s:mfs}),
whereas the periodizing scheme is controlled by
$\pdeg$, $N_0$, and $M_1$ (the latter being fixed at $4k/\pi$ as explained
in \sref{s:sys}).
We thus first test convergence with respect to the periodizing parameters
$\pdeg$ and $N_0$.
\fref{f:perconv} shows error convergence consistent with
exponential, in the scattering from
a grating of ``smooth'' obstacles from \fref{f:shapes}(a)
at low frequency.
Note that $\ebc$ is small throughout the parameter range,
and thus cannot alone be used to verify that the full periodic solution
has converged.
Choosing the converged parameters $\pdeg = 24$ and $N_0=13$, the
timing and error results are then given in 
\tref{t:reslo}, for the two types of BVP.
There are $\NN = NP = 9000$ obstacle unknowns,
at the borderline where the FMM starts to become useful.
Here the size of $Q$ is 3200 by 1387, making it rapid to invert (via SVD).
Thus at low frequency, the computation is dominated by
the FMM application of $\Aelse$ needed in each GMRES iteration.

We show error and timing results at higher frequencies in \tref{t:reshi}.
The wavenumber $k=40$, i.e.\ a unit cell period of $13\lambda \times 13\lambda$,
is around the largest that is practical on a single workstation,
needing many minutes to take the SVD of the $Q$ matrix of size 
16200 by 11493.
The SVD (factorization) and GMRES (solve) stages now take comparable times.
The solution (total wave $\ut = u + \ui$) for the two highest frequency
cases are shown on planes in \fref{f:plots}.
For evaluation of $u$ we used the FMM for the first term
in \eqref{urep} and direct spherical harmonic summation for the second.
Note that the number of GMRES iterations is similar (around 60)
for the transmission case as for the highly-resonant Neumann cup shape.
The latter models a grating of acoustic Helmholtz resonators.
This illustrates that, because the one-obstacle problem is
factorized, the iterative part of the scheme is immune to
obstacle resonances, in contrast to the case where
GMRES is used to solve the entire scattering problem.

\section{Conclusions}
\label{s:con}
We have presented an acoustic
solver for doubly-periodic gratings of smooth axisymmetric obstacles,
that is spectrally accurate with respect to all of its convergence
parameters, enabling high (10-digit) accuracies
to be reached efficiently even at medium to high frequencies.
It combines an existing axisymmetric
one-body solution using the method of fundamental solutions (MFS) with a
new periodizing scheme based on auxiliary spherical harmonics,
upward and downward radiation expansions,
and collocation on the walls of one unit cell.
The result avoids any singular surface quadratures,
is efficient for grating periods up to a dozen wavelengths,
handles highly resonant obstacles without extra cost, and is
relatively simple to code.
The one-body factorization cost is $\bigO(PN^3)$
with a small prefactor; in the high-frequency
limit this would scale as $\bigO(k^4)$.
However, the solve cost per new incident wave is then
dominated by only the $\bigO(NP)$ linear cost of an FMM call, i.e.\
$\bigO(k^2)$.
A key feature is that the periodization scheme is independent of
the one-body solver,
so that the latter could be replaced by
an existing boundary integral based solver \cite{gillman3d,bremer3d,bruno01}
without modification,
allowing the handling of more general shapes, corners, and edges.

Although our main contribution is the new periodizing scheme,
we also have contributed improvements to the one-body MFS scheme.
Firstly, we use boundary complexification to
place the MFS sources; the choice of distance parameter $\tau$
may be made cheaply using simulations via the analogous 2D BVP.
Secondly we give a rigorous analysis of the use of the periodic trapezoid
rule for evaluating Helmholtz ring kernels.
Unlike in boundary integral equation methods (which require arbitrarily close
source-target evaluations), all of our kernels can be evaluated in this
way because of the source separation in the MFS.
Since applying $(A-B Q^+ C)$ is equivalent to applying
the quasiperiodic Green's function, our method as presented cannot work
at Wood anomalies.
However, because the Wood singularity is
of inverse square-root form, the neighborhood around which a Wood anomaly
causes a loss of accuracy is small,
and this accuracy loss is consistent with backwards stability
given the accuracy with which the incident wavevector is specified.
We note that, were the solver to be
extended to handle connected interfaces, robust handling Wood anomalies
would then come for free (as in \cite{mlqp}).

Future extensions that would increase the range of application of
the solver include:
automating the choice of all convergence parameters given a shape and
wavenumber $k$;
including MFS source point choices that handle axisymmetric edges \cite{larrythesis};
generalizing to multilayer media, as in \cite{mlqp};
replacing the eight-neighbor FMM call
by a low-rank compression scheme for applying the $\Aelse$ interactions, which currently dominate the solution time \cite{larrythesis};
extension to the Maxwell equations
\cite{larrythesis};
and replacing the MFS scheme by a boundary integral
solver, such as the recent
$O(\NN^{3/2})$ fast direct solver
\cite{gillman3d}, to create a 3D version of \cite{qpfds}.

\section*{Acknowledgments} 
The authors thank Fridon Shubitidze for advice about the MFS,
Arvind Saibaba for his
suggestion for changing from left- to right-preconditioning with the GMRES,
and Leslie Greengard and Zydrunas Gimbutas for helpful discussions about tuning
the FMM. We also were helped by the comments of the anonymous referees.
We are grateful for funding by the National Science Foundation
under grant DMS-1216656, and by the Neukom Institute at Dartmouth College.

\begin{figure} 
\centering
\ig{width=3in}{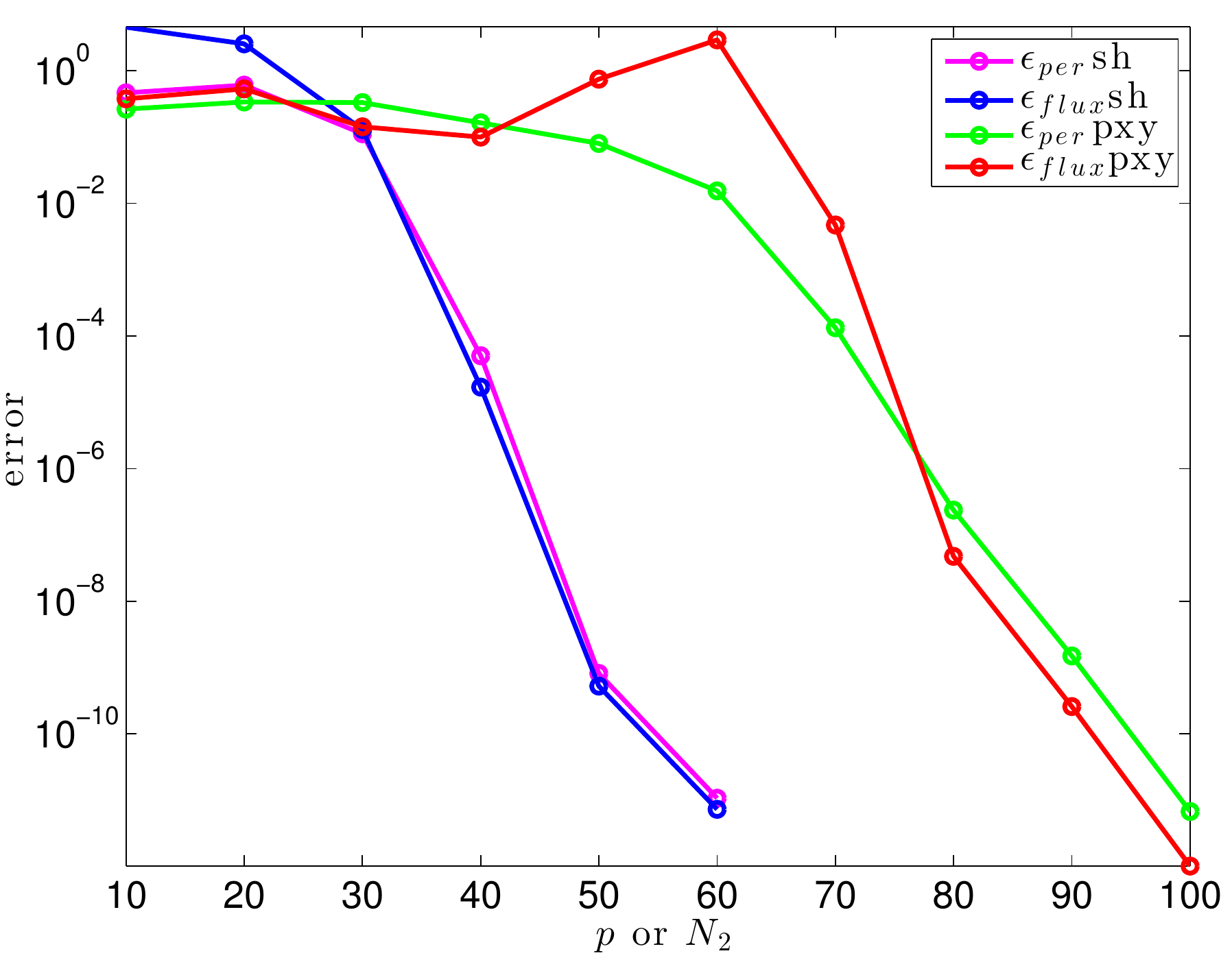}
\caption{%
Convergence comparison of spherical harmonics (``sh'') vs proxy points
(``pxy'')
as the auxiliary basis for the periodizing scheme,
for the Neumann scattering problem from a lattice of ``smooth'' obstacles
as in \fref{f:shapes}(a), at wavenumber $k=20$.
Note that the two schemes have nearly identical numbers of unknowns
when $p=N_2$.
We have fixed the Rayleigh--Bloch degree as $N_0 = 17$.}
\label{f:comp}
\end{figure}

\appendix
\section{Comparing spherical harmonics vs proxy points as a periodizing basis}

Our periodizing scheme represents the contribution
of the lattice of distant copies of the obstacle to the potential
with a basis expansion for regular Helmholtz solutions in the unit cell.
Here we compare two choices of this basis:
the $(\pdeg+1)^2$ spherical harmonics up to degree $\pdeg$ (as used in the rest of this work),
vs $N_2^2$ proxy points placed along lines of longitude on a distant sphere
of radius $R$.
The latter has been proposed in the context of periodizing
the 3D Laplace equation by Gumerov--Duraiswami \cite{gumerov}.
The radius $R$ used is $R = 3.5$, chosen to optimize proxy point efficiency.
The errors we test are $\eper$ and $\eflux$ defined in \sref{s:res},
for the Neumann scattering from a grating of ``smooth'' obstacles from \fref{f:shapes}(a)
at wavenumber $k=20$.
The period is around $7 \lambda$. 
We show in \fref{f:comp} the convergence results,
and see that
$N_2 \approx 2\pdeg$ is needed to achieve similar errors in the two
representations,
i.e.\ the proxy scheme requires four times the number of unknowns
required by the spherical harmonic scheme.
(Note that efficiency differences due to the different proxy point arrangements
discussed in \cite{gumerov} are small compared to this factor.)
We also checked that the solution potential $u$ at the point
$(0.9,0.9,0.9)$ agreed between the two methods
at their converged parameters with a relative error of $1.2 \times 10^{-11}$.
This is consistent with the errors shown in the figure, and provides
some verification of the correctness of each scheme.
In terms of speed,
for comparable 12-digit errors, the SVD of the $Q$ matrix for
$\pdeg=58$ spherical harmonics takes 217 s, whereas using
$N_2=100$ proxy points takes 1097 s, around five times slower.
Thus we claim that spherical harmonics are by far the better choice for
periodizing Helmholtz problems.

\bibliographystyle{amsplain}
\bibliography{liu}

\end{document}